\theoremstyle{plain}
\newtheorem{thm}{Theorem}
\newtheorem{cor}[thm]{Corollary}
\newtheorem{lemma}[thm]{Lemma}
\newtheorem{prop}[thm]{Proposition}
\newtheorem{conj}[thm]{Conjecture}
\theoremstyle{definition}
\newtheorem{rem}[thm]{Remark}
\newtheorem*{defi}{Definition}
\newtheorem*{nota}{Notation}
\newtheorem*{ex}{Example}
\newcommand{\qwy}{\textbf}
\newcommand{\uf}{\mathbb{F}}
\newcommand{\hb}{\mathcal{B}}
\newcommand{\hd}{\mathcal{D}}
\newcommand{\hu}{\mathcal{U}}
\newcommand{\ba}{\mathbf{a}}
\newcommand{\bz}{\mathbf{z}}
\renewcommand{\div}{\quad\mathrm{div}\;}
\DeclareMathOperator{\GF}{\mathsf{GF}} 
\DeclareMathOperator{\PG}{\mathsf{PG}}
\DeclareMathOperator{\AG}{\mathsf{AG}}
\DeclareMathOperator{\spa}{Span}
\DeclareMathOperator{\kar}{char}
\begin{document}

\title{\bf The number of directions determined by less than $q$ points}
\author{Szabolcs~L.~Fancsali, P\'eter~Sziklai and Marcella~Tak\'ats\footnote{The authors were partially supported by the following grants: OTKA K 81310 and OTKA CNK 77780, ERC, Bolyai and T\'AMOP 4.2.1./B-09/KMR-2010-0003}}
\date{February 28, 2012}

\maketitle

{\small\noindent\MakeUppercase{Notice:} this is the author's version of a work that was submitted for publication in \emph{J. Algebr. Combin.} Changes resulting from the publishing process, such as peer review, editing, corrections, structural formatting, and other quality control mechanisms are not reflected in this document. A definitive version was subsequently published in \emph{J. Algebr. Combin.}, Volume~\textbf{37}, Issue~1, (2013), Pages 27--37. The final publication is available at Springer via \doi{10.1007/s10801-012-0357-1}}\\

\begin{abstract}
In this article we prove a theorem about the number of directions
determined by less then $q$ affine points, similar to the result of
Blokhuis~et.~al.~\cite{bbbss} on the number of directions determined
by $q$ affine points.
\end{abstract}

\section{Introduction}

In this article, $p$ is a prime and $q=p^h$, where $h\ge 1$. $\GF(q)$
denotes the finite field with $q$ elements, and $\uf$ can denote an
arbitrary field (or maybe a Euclidean ring). $\PG(d,q)$ denotes the
projective geometry of dimension $d$ over the finite field
$\GF(q)$. $\AG(d,q)$ denotes the affine geometry of dimension $d$ over
$\GF(q)$ that corresponds to the co-ordinate space $\GF(q)^d$ of rank
$d$ over $\GF(q)$.

For the affine and projective planes $\AG(2,q)\subset\PG(2,q)$, we
imagine the line $\ell_\infty=\PG(2,q)\setminus\AG(2,q)$ at infinity
as the set $\ell_\infty\cong\GF(q)\cup\{\infty\}$. So the
\emph{non-vertical directions} are field-elements (numbers) and the
\emph{vertical direction} is $\infty$.

The original problem of direction sets was the following. Let
$f:\GF(q)\rightarrow\GF(q)$ be a function and let
$\hu=\{(x,f(x))\;|\;x\in\GF(q)\}\subseteq\AG(2,q)$ be the graph of the
function $f$. The question is, how many \emph{directions} can be
\emph{determined} by the graph of $f$.
\begin{defi}[Direction set]
If $\hu$ denotes an arbitrary set of points in the affine plane
$\AG(2,q)$ then we say that the set
\begin{equation*}
\hd=\left\{\left.\frac{b-d}{a-c}\;\right|
\;(a,b),(c,d)\in\hu,(a,b)\neq(c,d)\right\}
\end{equation*}
is the \emph{set of directions determined by $\hu$}. We define
$\dfrac{a}{0}$ as $\infty$ if $a\neq 0$, thus
$\hd\subseteq\GF(q)\cup\{\infty\}$. If $\hu$ is the graph of a
function, then it simply means that $|\hu|=q$ and $\infty\notin\hd$.
\end{defi}

In~\cite{Ball2003}, \textrm{Simeon Ball} proved a stronger version of
the structure theorem of \textrm{Aart Blokhuis}, \textrm{Simeon Ball},
\textrm{Andries Brouwer}, \textrm{Leo Storme} and
\textrm{Tam\'as Sz\H{o}nyi}, published in~\cite{bbbss}.
To recall their result we need some definitions.
\begin{defi}
Let $\hu$ be a set of points of $\AG(2,q)$. If $y\in\ell_\infty$ is an
arbitrary direction, then let $s(y)$ denote the greatest power of $p$
such that each line $\ell$ of direction $y$ meets $\hu$ in zero modulo
$s(y)$ points. In other words,
\begin{equation*}
s(y)=\gcd\big(\left\{|\ell\cap\hu|\;\big|\;\ell\cap\ell_\infty=\{y\}\right\}\cup\left\{p^h\right\}\big).
\end{equation*}
Let $s$ be the greatest power of $p$ such that each line $\ell$ of
direction in $\hd$ meets $\hu$ in zero modulo $s$ points. In other
words,
\begin{equation*}
s=\gcd_{y\in\hd}s(y)=\min_{y\in\hd}s(y).
\end{equation*}
Note that $s(y)$ and thus also $s$ might be equal to $1$.
Note that $s(y)=1$ for each non-determined direction $y\notin\hd$.
\end{defi}
\begin{rem}\label{rem:1mods}
Suppose that $s\ge p$. Then for \emph{each} line
$\ell\subset\PG(2,q)$:
\begin{align*}
\quad\quad\quad
&\mbox{\textbf{either}} &
\left(\hu\cup\hd\right)\cap\ell\;&=\emptyset;
\quad\quad\quad\\
\quad\quad\quad
&\mbox{\textbf{or}} &
\left|\left(\hu\cup\hd\right)\cap\ell\right|&\equiv 1\pmod{s}.
\quad\quad\quad
\end{align*}
Moreover, $|\hu|\equiv 0\pmod{s}$.
\end{rem}

\noindent
(If $s=1$ then $0\equiv 1\pmod{s}$, so in this case these remarks above
would be meaningless.)

\begin{proof}
Fix a direction $y\in\hd$. Each affine line with slope $y$ meets $\hu$
in zero modulo $s$ points, so $|\hu|\equiv 0\pmod{s}$.

\noindent
An affine line $L\subset\AG(2,q)$ with slope $y\in\hd$
meets $\hu$ in $0\pmod{s}$ points, so the \emph{projective} line
$\ell=L\cup\!\{y\}$ meets $\hu\cup\hd$ in $1\pmod{s}$ points.

\noindent
An affine line $L\subset\AG(2,q)$ with slope $y\notin\hd$ meets
$\hu$ in at most one point, so the \emph{projective} line
$\ell=L\cup\!\{y\}$ meets $\hu\cup\hd$ in either zero or one point.

\noindent
Let $P\in\hu$ and let $L\subset\AG(2,q)$ an
affine line with slope $y\in\hd$, such that $P\in L$. Then the
\emph{projective} line $\ell=L\cup\!\{y\}$ meets $\hu$ in
$0\pmod{s}$ points, and thus, $\ell$ meets $\hd\cup\hu\setminus\{P\}$
also in $0\pmod{s}$ points. Thus, considering all the lines through
$P$ (with slope in $\hd$), we get $|\hu\cup\hd|\equiv 1\pmod{s}$.
Since $\hu$ has $0\pmod{s}$ points, $|\hd|\equiv 1\pmod{s}$. So we get
that $\hu\cup\hd$ meets also the ideal line in $1\pmod{s}$ points.
\end{proof}

\begin{rem}[Blocking set of R\'edei type]\label{rem:mbsoRt}
If $|\hu|=q$ then each of the $q$ affine lines with slope $y\notin\hd$
meets $\hu$ in exactly one point, so $\hb=\hu\cup\hd$ is a
blocking set meeting each projective line in $1\pmod{s}$
points. Moreover, if $\infty\notin\hd$ then $\hu$ is the graph of a
function, and in this case the blocking set $\hb$ above is called
\emph{of R\'edei type}.
\end{rem}

\begin{thm}[Blokhuis, Ball, Brouwer, Storme and Sz\H{o}nyi;
and Ball]\label{thm:ball}
{\rm \cite{bbbss} and \cite[Theorem~1.1]{Ball2003}}
Let $|\hu|=q$ and $\infty\notin\hd$. Using the notation $s$ defined
above, one of the following holds:
\begin{align*}
\quad
&\mbox{\textbf{\emph{either}}}\quad\quad\quad\quad\; s=1
& &\mbox{and}&
\dfrac{q+3}{2}\le\left|\hd\right|&\le q;\quad\\
\quad
&\mbox{\textbf{\emph{or}}}\quad\GF(s)\mbox{ is a subfield of }\GF(q)
& &\mbox{and}&
\dfrac{q}{s}+1\le\left|\hd\right|&\le\dfrac{q-1}{s-1};\quad\\
\quad
&\mbox{\textbf{\emph{or}}}\quad\quad\quad\quad\quad\quad s=q
& &\mbox{and}& |\hd|&=1.\quad
\end{align*}
Moreover, if $s>2$ then $\hu$ is a $\GF(s)$-linear affine
set (of rank $\log_s q$).\qed
\end{thm}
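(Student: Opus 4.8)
The plan is to read off all three regimes from the factorisation behaviour of the Rédei polynomial of $\hu$. Write $\hu=\{(a,f(a)):a\in\GF(q)\}$ and put
\[
R(X,Y)=\prod_{a\in\GF(q)}\bigl(X+aY-f(a)\bigr)=\sum_{j=0}^{q}\sigma_j(Y)\,X^{q-j},\qquad \deg\sigma_j\le j,\ \ \sigma_0=1 .
\]
The point of $R$ is that, for a fixed $y\in\GF(q)$, the multiplicity of $x_0$ as a root of $R(X,y)$ equals $|\ell\cap\hu|$ for the affine line $\ell\colon Y=yX+x_0$. Hence $y\notin\hd$ forces $R(X,y)=X^q-X$, while $y\in\hd$ forces $R(X,y)$ to be a product of $q$ linear factors over $\GF(q)$ which is a non-constant $s(y)$-th power of a polynomial of degree $q/s(y)$; in the non-trivial cases ($s\ge p$) it is in particular an $s$-th power, and $s\mid q$ since both are powers of $p$.

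First I would dispose of the upper bound $|\hd|\le (q-1)/(s-1)$ by elementary incidence counting. Fix $P\in\hu$; every other point of $\hu$ lies on the line joining it to $P$, whose slope is a determined direction, so the $|\hd|$ lines through $P$ of determined slope cover $\hu\setminus\{P\}$ and pairwise meet only in $P$. Each such line meets $\hu$ in a nonzero multiple of $s$, hence in $\ge s$ points, so $q-1=|\hu|-1\ge|\hd|\,(s-1)$. This already forces $|\hd|=1$ when $s=q$, and, together with $|\hd|\equiv1\pmod s$ from Remark~\ref{rem:1mods}, gives the stated right endpoints; the trivial bound $|\hd|\le q$ for $s=1$ comes from $\infty\notin\hd$.

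Now assume $s\ge p$; the core is the lower bound together with linearity, and here the polynomial method does the work. From the expansion: if $y_0\notin\hd$ then $\sigma_j(y_0)=0$ for $1\le j\le q-2$ and $j=q$, and $\sigma_{q-1}(y_0)=-1$; if $y\in\hd$ then $\sigma_j(y)=0$ whenever $s\nmid j$, because $R(X,y)$, being an $s$-th power over $\GF(q)$, is a polynomial in $X^{s}$. Consequently, for $1\le j\le q-2$ with $s\nmid j$ the polynomial $\sigma_j$ vanishes on all of $\GF(q)$ while $\deg\sigma_j<q$, so $\sigma_j\equiv0$; thus $R(X,Y)-\sigma_{q-1}(Y)X$ is a polynomial in $X^{s}$, and for \emph{every} $y\in\GF(q)$ its specialisation $R(X,y)-\sigma_{q-1}(y)X$ is still fully reducible over $\GF(q)$ (it is $R(X,y)$ when $y\in\hd$, as $\sigma_{q-1}(y)=0$ there, and $X^q$ when $y\notin\hd$). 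Replacing $X^{s}$ by a new variable turns this into a fully reducible lacunary polynomial of degree $q/s$, which is governed by Blokhuis' theorem on lacunary polynomials (and its sharpenings by Ball, and by G\'acs when $q$ is a square). Applying that theory at level $q/s$ yields that $\GF(s)$ is a subfield of $\GF(q)$ (so $\log_s q\in\mathbb{Z}$) and that $\hu$ is a $\GF(s)$-linear set of rank $\log_s q$. Once linearity is known, $\hd$ is exactly the $\GF(s)$-linear set induced by $\hu$ on $\PG(1,q)$, and the standard lower bound for the size of a rank-$\log_s q$ linear set in $\PG(1,q)$ that is not a single point gives $|\hd|\ge q/s+1$, the single-point case occurring precisely when $s=q$, where $\hu$ is an affine line.

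What remains, and what I expect to be the real obstacle, is the case $s=1$: one must prove that if $\hu$ determines at most $(q+1)/2$ directions then in fact $s>1$, so that below $(q+3)/2$ this case is empty. This is the hard half of the R\'edei--Blokhuis--Ball circle of ideas: one extracts from $R(X,Y)=0$ a plane curve whose degree is bounded in terms of $|\hd|$ and which — because $R(X,y_0)=X^q-X$ for the $q-|\hd|$ non-determined slopes — contains far too many $\GF(q)$-rational points to be compatible with the Hasse--Weil / St\"ohr--Voloch bound unless $|\hd|\ge(q+3)/2$ (for $q$ non-prime one first peels off a subfield via the same lacunary-polynomial dichotomy, as in G\'acs and Ball). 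Running this curve estimate cleanly, and extracting the precise divisibility $\log_s q\in\mathbb{Z}$ from the lacunary theorem in the intermediate regime $1<s<q$, are the two steps that need genuine care; everything else is bookkeeping with $R(X,Y)$.
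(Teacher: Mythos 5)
The paper itself offers no proof of Theorem~\ref{thm:ball}: it is imported verbatim from \cite{bbbss} and \cite[Theorem~1.1]{Ball2003} and closed with \qedsymbol, so there is nothing internal to compare against. Judged on its own terms, your proposal is an accurate road map of the published proof rather than a proof. What you actually carry out is correct: the incidence count through a point $P\in\hu$ giving $q-1\ge|\hd|(s-1)$, hence the upper bound and the $s=q$ case; and the observation that $\sigma_j$ vanishes on all of $\GF(q)$ for $1\le j\le q-2$ with $s\nmid j$, so that $R(X,Y)-\sigma_{q-1}(Y)X$ lies in $\GF(q)[Y][X^s]$ and specialises to a fully reducible polynomial for every $y$. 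But the three conclusions that constitute the substance of the theorem --- the bound $|\hd|\ge(q+3)/2$ when $s=1$, the fact that $\GF(s)$ is a subfield of $\GF(q)$ together with $|\hd|\ge q/s+1$, and the $\GF(s)$-linearity of $\hu$ for $s>2$ --- are all delegated to ``Blokhuis' theorem on lacunary polynomials (and its sharpenings)'' and to a Hasse--Weil / St\"ohr--Voloch argument. Those invocations \emph{are} the content of \cite{bbbss} and \cite{Ball2003}; appealing to them is not different in kind from citing the theorem being proved, and you concede as much when you flag them as ``the two steps that need genuine care.'' So the proposal has a genuine gap: the core is named, not executed.

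One concrete internal slip worth fixing even in a sketch: you derive the lower bound $|\hd|\ge q/s+1$ from the linearity of $\hu$ and the minimum size of a rank-$\log_s q$ linear set in $\PG(1,q)$. Linearity is only asserted (and only provable) for $s>2$, yet the middle line of the theorem claims $q/s+1\le|\hd|$ already for $s=2$; your chain of reasoning does not cover that case. In the original arguments this lower bound is extracted directly from degree considerations on the lacunary polynomial (essentially the fact that $\deg_X H\le q - q/s$ in the notation of the present paper, combined with Lemma~\ref{lem:egyes}-type counting), independently of any linearity statement, and that is the route you would need here as well.
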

\begin{defi}[Affine linear set]
A \emph{$\GF(s)$-linear affine set} is the $\GF(s)$-linear span of
some vectors in
$\AG(n,q)\cong\GF(s^{\log_s q})^n\cong\GF(s)^{n\log_s q}$ (or possibly
a translate of such a span). The \emph{rank} of the affine linear set
is the rank of this span over $\GF(s)$.
\end{defi}
What about the directions determined by an affine set
$\hu\subseteq\AG(2,q)$ of cardinality \emph{not}~$q$?
Using the pigeon hole principle, one can easily prove
that if $|\hu|>q$ then it determines all the $q+1$ directions.
So we can restrict our research to affine sets of
\emph{less than} $q$ points.

Examining the case $q=p$ prime,
\textrm{Tam\'as Sz\H{o}nyi}~\cite{Szonyi1999} and later
(independently) also \textrm{Aart Blokhuis}~\cite{Blokhuis2001} have
proved the following result.
\begin{thm}[Sz\H{o}nyi; Blokhuis]\label{thm:sztaab}
{\rm \cite[Theorem~5.2]{Szonyi1999}}
Let $q=p$ prime and suppose that $1<|\hu|\le p$. Also suppose that
$\infty\notin\hd$. Then
\begin{align*}
\quad\quad\quad\quad\quad\quad\quad
&\mbox{\textbf{\emph{either}}}
&\dfrac{|\hu|+3}{2}\le\left|\hd\right|&\le p;
\quad\quad\quad\quad\quad\quad\quad\\
\quad\quad\quad\quad\quad\quad\quad
&\mbox{\textbf{\emph{or}}}\quad\hu\mbox{ is collinear}
&\mbox{ and }\quad\hphantom{\le}\left|\hd\right|&=1.
\quad\quad\quad\quad\quad\quad\quad
\end{align*}
Moreover, these bounds are sharp.
\qed
\end{thm}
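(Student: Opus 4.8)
The natural approach is the classical R\'edei polynomial technique, the same one underlying Theorem~\ref{thm:ball}. First, a reduction. Since $\infty\notin\hd$, any two of the $n:=|\hu|\ge 2$ points of $\hu$ span a line of finite slope, so $|\hd|\ge 1$; and if $|\hd|=1$, say $\hd=\{m_0\}$, then every pair of points of $\hu$ spans a line of slope $m_0$, so $\hu$ lies on one such line, while conversely a collinear $\hu$ --- which must be on a non-vertical line, as $\infty\notin\hd$ --- determines exactly that one direction. Also $\hd\subseteq\GF(p)$, so $|\hd|\le p$ is immediate. Hence what remains is to show that $N:=|\hd|<\frac{n+3}{2}$ forces $\hu$ collinear. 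For $n=p$ this is Theorem~\ref{thm:ball} itself (over $\GF(p)$ the only subfields are $\GF(1)$ and $\GF(p)$, so its middle case is vacuous), so assume $n<p$; translating $\hu$ in the $x$-direction, which changes neither $\hd$ nor collinearity, assume $0\notin\{a_i\}$, where $\hu=\{(a_i,b_i):1\le i\le n\}$ has the $a_i$ pairwise distinct (as $\infty\notin\hd$). Form the R\'edei polynomial
\[
R(X,Y)=\prod_{i=1}^{n}\bigl(X+a_iY-b_i\bigr),
\]
a product of $n$ pairwise distinct linear forms, monic of degree $n$ in $X$, with the coefficient of $X^{n-j}$ of $Y$-degree at most $j$, and of degree exactly $n$ in $Y$ since $\prod_ia_i\ne 0$.

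The key local observation: for every non-determined $m\in\GF(p)$ the lines of slope $m$ meet $\hu$ in at most one point each, so the $n$ elements $b_i-a_im$ are distinct, hence $R(X,m)$ is a product of $n$ distinct linear factors and therefore divides $X^p-X$. Dividing with remainder, $X^p-X=R(X,Y)\,g(X,Y)+r(X,Y)$ in $\GF(p)[Y][X]$ with $\deg_X r<n$ (legitimate as $R$ is monic in $X$), we get $r(X,m)=0$ for each of the $p-N$ non-determined $m$. Tracking $Y$-degrees through the division, the coefficient of $X^{p-n-k}$ in $g$ has $Y$-degree at most $k$, so the coefficient $r_\ell(Y)$ of $X^\ell$ in $r$ has $\deg_Y r_\ell\le p-\ell$; as $r_\ell$ vanishes at the $p-N$ non-determined directions, $r_\ell\equiv 0$ once $p-\ell<p-N$, that is, $\deg_X r\le N$. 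Hence
\[
R(X,Y)\,g(X,Y)=X^p-X-r(X,Y),\qquad \deg_X r\le N,
\]
so $R\,g$, seen as a polynomial in $X$, equals $X^p$ plus a tail of degree at most $N$: a lacunary polynomial with a long gap below the leading term, specializing for each non-determined $Y=m$ to the fully reducible $X^p-X=R(X,m)\,g(X,m)$, with $R(X,m)$ and $g(X,m)$ split and coprime.

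The core of the proof is to exploit this lacunary structure, exactly in the manner of R\'edei's theory of lacunary polynomials and its use in~\cite{bbbss}. It is also convenient to move a second non-determined direction to $0$ (one exists, as $p-N\ge\frac{p-1}{2}\ge1$; the case $p\le2$ is immediate), so that the $b_i$ too are distinct and $R$ has degree $n$ in both variables. The plan is to play the gap off against the factorization $R=\prod_i(X+a_iY-b_i)$, via the standard tools: $R$ is squarefree over $\GF(p)(Y)$, its roots $b_i-a_iY$ being distinct polynomials, so $\gcd_X(R,\partial_XR)=\gcd_X(R,\partial_YR)=1$ (the latter using $0\notin\{a_i\}$); differentiating the displayed identity in $X$ annihilates $X^p$ and yields $(\partial_XR)\,g+R\,\partial_Xg=-1-\partial_Xr$, of $X$-degree less than $n$; and similarly for $\partial_Y$. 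Combining the resulting congruences modulo $R$, reading off the leading coefficients of the $r_\ell$ in $X$ and in $Y$, and feeding them back into the linear factors of $R$ should force the $n$ dual lines $X+a_iY-b_i=0$ to pass through a common point --- which is precisely the assertion that $\hu$ is collinear.

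The step I expect to be the real obstacle is this last one: upgrading the lacunary identity $R\,g=X^p+(\text{degree}\le N)$, with $R$ a product of $n$ distinct linear forms, to the rigidity conclusion, with the sharp threshold $N\ge\frac{n+3}{2}$ (equivalently $n\le 2N-3$) coming out of it. A crude count of the points of the curve $R(X,Y)=0$, which is a union of $n$ lines in the dual plane, only recovers the trivial inequality $N\le\binom n2$, so the lacunary input is genuinely needed; and it is here that $q=p$ prime is essential --- the absence of proper subfields removes the intermediate $\GF(s)$-linear configurations of Theorem~\ref{thm:ball}, leaving the clean dichotomy between $\hu$ collinear and $|\hd|\ge\frac{n+3}{2}$. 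For the sharpness claim, the value $1$ is attained by any collinear set, and $\frac{n+3}{2}$ by suitable subsets of a conic --- for $p$ odd, appropriately chosen affine parts of the parabola $\{(x,x^2)\}$ --- paralleling the extremal configurations behind Theorem~\ref{thm:ball}.
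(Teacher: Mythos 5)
First, note that the paper does not prove this theorem at all: it is quoted from Sz\H{o}nyi's \emph{Around R\'edei's theorem} with a \qedsymbol{} attached to the statement, so there is no in-paper proof to compare against. Judged on its own, your proposal correctly assembles the standard preliminaries and they are all sound: the reduction of the dichotomy to ``$|\hd|\ge 2$ implies $|\hd|\ge(|\hu|+3)/2$'' (since $|\hd|=1$ is equivalent to collinearity when $\infty\notin\hd$), the disposal of the case $|\hu|=p$ via Theorem~\ref{thm:ball}, the R\'edei polynomial, the Euclidean division $X^p-X=R\,g+r$, and the degree bookkeeping giving $\deg_X r\le N$ because each $r_\ell$ has $Y$-degree at most $p-\ell$ and vanishes at the $p-N$ non-determined directions. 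This is exactly the opening of Sz\H{o}nyi's argument.

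However, the proof has a genuine hole precisely where the content of the theorem lies, and you flag it yourself: the passage from the lacunary identity $R\,g=X^p-X-r$, $\deg_X r\le N$, to the bound $|\hu|\le 2N-3$ is only described as something that ``should force'' the conclusion. Nothing written actually derives it, and the route you gesture at (forcing the dual lines through a common point) aims at the wrong target --- collinearity is already equivalent to $N=1$ by your own reduction; what must be proved is the numerical lower bound on $N$ when $N\ge 2$. The missing step is a R\'edei--Blokhuis root count at a \emph{determined} direction $y\in\hd$: writing $R(X,y)g(X,y)=X^p-w(X)$ with $w=X+r(X,y)$, $\deg w\le N$, every root of $R(X,y)$ lies in $\GF(p)$ and hence satisfies $w(x)=x^p=x$, so the distinct roots are at most $\deg\bigl(w(X)-X\bigr)\le N$ in number (and $w(X)\ne X$ because $R(X,y)$ has a multiple root while $X^p-X$ does not); the excess multiplicities are controlled by $w'$, of degree at most $N-1$. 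This yields $|\hu|\le 2N-1$ immediately, and a further refinement (in the spirit of Blokhuis's improvement of R\'edei's bound) is needed to reach $2N-3$; neither the count nor the refinement appears in your write-up. The sharpness claim is likewise only asserted, not verified. So the proposal is a correct and well-chosen scaffold for the standard proof, but the theorem is not proved.
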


In this article we try to generalize this result to the $q=p^h$ prime
\emph{power} case by proving an analogue of
\qwy{Theorem~\ref{thm:ball}} for the case $|\hu|\le q$. Before we
examine the number of directions determined by less than $q$ affine
points in the plane, we ascend from the plane in the next section and
examine the connection between linear sets and direction sets in
arbitrary dimensions. The further sections will return to the plane.

\section{Linear sets as direction sets}

The affine space $\AG(n,q)$ and its ideal hyperplane
$\Pi_\infty\cong\PG(n\!-\!1,q)$ of directions together constitute a
projective space $\PG(n,q)$. We say that the point $P\in\Pi_\infty$ is
a direction determined by the affine set $\hu\subset\AG(n,q)$ if there
exists at least one line through $P$ that meets $\hu$ in at least two
points.
\begin{defi}[Projective linear set]
Suppose that $\GF(s)$ is a subfield of $\GF(q)$. A
\emph{projective $\GF(s)$-linear set} $\hb$ of rank \mbox{$d+1$} is
a projected image of the canonical subgeometry
$\PG(d,s)\subset\PG(d,q)$ from a center disjoint to this
subgeometry. The projection can yield multiple points.
\end{defi}
\begin{prop}
Suppose that $\hu$ is an affine $\GF(s)$-linear set of rank $d+1$ in
$\AG(n,q)$ such that $\AG(n,q)$ is the smallest dimensional
affine subspace that contains $\hu$. Let $\hd$ denote the set of
directions determined by $\hu$. The set $\hu\cup\hd$ is a projective
$\GF(s)$-linear set of rank $d+1$ in $\PG(n,q)$ and all the multiple
points are in $\hd$.
\end{prop}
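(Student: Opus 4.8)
The plan is to exhibit $\hu\cup\hd$ directly as a linear set by homogenising. Regard $\PG(n,q)$ as the projective space on the vector space $\GF(q)^{n+1}$, put $e_0=(1,0,\dots,0)$, identify the affine point $(x_1,\dots,x_n)\in\AG(n,q)$ with $\langle(1,x_1,\dots,x_n)\rangle$, and let the ideal hyperplane $\Pi_\infty$ be $\{x_0=0\}$. Translating $\hu$ alters neither $\hd$, nor the property of being an affine $\GF(s)$-linear set of a given rank, nor the smallest affine subspace containing $\hu$, so I may assume $0\in\hu$; then $\hu$ is a $\GF(s)$-subspace of $\GF(q)^n$, and the minimality hypothesis says exactly that $\spa_{\GF(q)}(\hu)=\GF(q)^n$. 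Now set
\[
W=\GF(s)\,e_0\ \oplus\ \{\,(0,u)\;:\;u\in\hu\,\}\ \subseteq\ \GF(q)^{n+1},
\]
a $\GF(s)$-subspace; its $\GF(s)$-rank is $d+1$, the rank attached to the affine linear set $\hu$ once lifted to homogeneous coordinates. Since $W$ contains $e_0$ together with all $(0,u)$, $u\in\hu$, and $\hu$ spans $\GF(q)^n$ over $\GF(q)$, the subspace $W$ spans $\GF(q)^{n+1}$ over $\GF(q)$; hence the $\GF(s)$-linear set
\[
L_W=\{\,\langle w\rangle_{\GF(q)}\;:\;w\in W\setminus\{0\}\,\}
\]
is, by the standard equivalence, the projection into $\PG(n,q)$ of the canonical subgeometry $\PG(d,s)\subset\PG(d,q)$ from a centre disjoint from it, i.e.\ a projective $\GF(s)$-linear set of rank $d+1$ lying in no hyperplane of $\PG(n,q)$.

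Next I would check that $L_W=\hu\cup\hd$ as point sets. By construction every $w\in W$ has the form $w=\mu e_0+(0,u)=(\mu,u)$ with $\mu\in\GF(s)$ and $u\in\hu$. If $\mu\neq 0$ then $\mu^{-1}\in\GF(s)$, so $\mu^{-1}u\in\hu$ and $\langle w\rangle=\langle(1,\mu^{-1}u)\rangle$ is the affine point of $\hu$ corresponding to $\mu^{-1}u$; conversely every point of $\hu$ arises so (take $\mu=1$). If $\mu=0$ then $\langle w\rangle=\langle(0,u)\rangle$ with $u\in\hu\setminus\{0\}$, and these are precisely the directions determined by $\hu$: the direction joining two affine points $(1,u_1),(1,u_2)$ of $\hu$ is $\langle(0,u_1-u_2)\rangle$, and $u_1-u_2$ runs through every element of $\hu$ as the pair varies (e.g.\ take $u_2=0$). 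Thus the affine part of $L_W$ is $\hu$ and its ideal part is $\hd$, so $L_W=\hu\cup\hd$.

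Finally I would show that every multiple point of $L_W$ lies in $\hd$, equivalently that each affine point of $L_W$ has weight $1$. The weight of $\langle(1,u)\rangle$ (with $u\in\hu$) is $\dim_{\GF(s)}\!\bigl(W\cap\GF(q)(1,u)\bigr)$; if $\lambda(1,u)\in W$ for some $\lambda\in\GF(q)$, then its $0$-th coordinate $\lambda$ is the $0$-th coordinate of an element of $W$, hence $\lambda\in\GF(s)$ by the definition of $W$. Therefore $W\cap\GF(q)(1,u)=\GF(s)(1,u)$ has $\GF(s)$-rank $1$, so $\langle(1,u)\rangle$ is simple, and consequently any point of $L_W$ of weight greater than $1$ lies in $\hd$. (Ideal points genuinely may be multiple — this is exactly when $d+1>n$ and the projection is non-trivial — which is why the statement only asserts the inclusion.)

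The one step that is not pure bookkeeping is the passage from ``$W$ is a full $\GF(s)$-subspace of $\GF(q)^{n+1}$ of rank $d+1$'' to ``$L_W$ is the projection of a canonical $\PG(d,s)$'', i.e.\ invoking the equivalence of the two descriptions of a projective linear set together with the fact that a full subspace yields a linear set contained in no hyperplane; everything else reduces to the computations above. One should also check at the outset that the rank normalisation used here for $W$ matches the rank the paper assigns to the affine linear set $\hu$, since this pins down the index $d$ in the statement.
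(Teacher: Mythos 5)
Your proof is correct in substance but takes a different (though closely related) route from the paper's. The paper never writes down your subspace $W$: it realises $\hu$ as the image of the canonical affine subgeometry $\AG(d\!+\!1,s)\subset\AG(d\!+\!1,q)$ under an explicit linear projection $\pi$ (sending a standard basis of $\GF(q)^{d+1}$ onto the $\GF(s)$-generators of $\hu$), extends $\pi$ to the ideal hyperplane, and argues that the extension is a collineation carrying determined directions to determined directions; thus $\hu\cup\hd$ is literally exhibited as a projected canonical subgeometry, which is the paper's \emph{definition} of a projective linear set. You instead work inside $\GF(q)^{n+1}$ with the subspace description $L_W$ and then invoke the standard equivalence with the projection description; that equivalence is precisely what the paper's explicit construction amounts to, so if you may not cite it you would have to reproduce essentially their construction at that step. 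What your version buys is a cleaner verification of the two set-theoretic claims: the identity $L_W=\hu\cup\hd$ (using that $\hu$ is a $\GF(s)$-subspace, so differences of its elements again lie in $\hu$) and, via the weight computation $W\cap\GF(q)(1,u)=\GF(s)(1,u)$, the fact that every affine point is simple. The paper's proof leaves the multiple-point claim essentially implicit (it follows from injectivity of $\pi$ on $\AG(d\!+\!1,s)$ together with the centre lying in the ideal hyperplane).

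The point you flagged at the end does need pinning down, and it is the one place where numbers do not match. With the paper's normalisation, an affine linear set of rank $d+1$ is the $\GF(s)$-span of $d+1$ independent vectors, so $|\hu|=s^{d+1}$ and your $W=\GF(s)e_0\oplus(\{0\}\times\hu)$ has $\GF(s)$-rank $d+2$, not $d+1$ as you assert. This is less a defect of your argument than of the statement itself: a projective linear set of rank $d+1$ is a projection of $\PG(d,s)$ and so has at most $(s^{d+1}-1)/(s-1)<s^{d+1}\le|\hu\cup\hd|$ points, so $\hu\cup\hd$ cannot have rank $d+1$; and the paper's own proof in fact produces the image of $\PG(d\!+\!1,s)$, which by its own definition is a linear set of rank $d+2$. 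The conclusion that both your construction and the paper's actually establish is rank $d+2$.
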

\begin{proof}
Without loss of generality, we can suppose that $\hu$ contains the
origin and suppose that $\hu$ is the set of $\GF(s)$-linear
combinations of the vectors $\ba_0,\ba_1,\dots,\ba_d$. We can
coordinatize $\AG(n,q)$ such that $\ba_{d-n+1},\dots,\ba_d$ is the
standard basis of $\GF(q)^n\cong\AG(n,q)$.

\noindent
Embed $\GF(q)^n\cong\AG(n,q)$ into $\GF(q)^{d+1}\cong\AG(d\!+\!1,q)$
such that
$\bz_0,\bz_1,\dots,\linebreak[4]\bz_{d-n},\ba_{d-n+1},\dots,\ba_d$ is
the standard basis. Let $\pi$ denote the projection of
$\AG(d\!+\!1,q)$ onto $\AG(n,q)$ such that $\pi(\bz_i)=\ba_i$ for each
$i=0,\dots,d-n$ and $\pi(\ba_j)=\ba_j$ for each $j>d-n$. Then $\hu$ is
the image of the canonical subgeometry $\AG(d\!+\!1,s)$ by $\pi$.

\noindent
Extend $\pi$ to the ideal hyperplane. The extended $\bar{\pi}$ is a
collineation so the image of a determined direction is a determined
direction, and vice versa, let $A$ and $B$ two arbitrary distinct
points in $\ell\cap\hu$ and let $P$ be the direction determined by
$\pi^{-1}(A)$ and $\pi^{-1}(B)$. Then the direction of $\ell$ is
$\bar{\pi}(P)$.
\end{proof}
\begin{cor}
If $\hd$ is the set of directions determined by an affine
$\GF(s)$-linear set, then $\hd$ is a projective $\GF(s)$-linear
set.\qed
\end{cor}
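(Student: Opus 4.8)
The plan is to reuse the construction from the proof of the Proposition. First I would reduce to the situation where its hypothesis holds: given an affine $\GF(s)$-linear set $\hu\subseteq\AG(n,q)$ of rank $d+1$, replace $\AG(n,q)$ by the smallest affine subspace $A$ containing $\hu$. Since every line meeting $\hu$ in at least two points is contained in $A$, the direction set $\hd$ is unchanged and lies in the ideal hyperplane of $A$; and $\hu$ is still a $\GF(s)$-linear set of rank $d+1$, now affinely spanning $A$. So we may assume $\AG(n,q)$ itself is the smallest affine subspace containing $\hu$, i.e.\ the $d+1$ defining vectors $\ba_0,\dots,\ba_d$ are $\GF(s)$-independent and have $\GF(q)$-rank $n$.

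Next I would set up exactly the maps $\pi:\AG(d\!+\!1,q)\to\AG(n,q)$ and its projective extension from the Proposition's proof, so that $\hu=\pi(\AG(d\!+\!1,s))$ and $\bar\pi$ restricted to the ideal hyperplane $\PG(d,q)$ of $\AG(d\!+\!1,q)$ is the projection, from a center $\Gamma$ of dimension $d-n$, onto the ideal hyperplane $\PG(n\!-\!1,q)$ of $\AG(n,q)$. The two facts to check are: (i) the set of directions determined by the canonical subgeometry $\AG(d\!+\!1,s)$ inside $\AG(d\!+\!1,q)$ is precisely its ideal hyperplane, the canonical subgeometry $\PG(d,s)\subset\PG(d,q)$ — this is immediate, since any two $\GF(s)$-rational affine points differ by a $\GF(s)$-rational vector, and conversely the origin and any nonzero $\GF(s)$-vector realize any $\GF(s)$-rational direction; and (ii) using that $\bar\pi$ maps determined directions to determined directions and back (lift a secant of $\hu$ through two affine points to a secant of the subgeometry, and push the pair origin/$\ba_i$ the other way), one obtains $\hd=\bar\pi\big(\PG(d,s)\big)$ after deleting the points of $\Gamma$.

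The key step is then to observe that $\Gamma\cap\PG(d,s)=\emptyset$, so that no point of the subgeometry is lost: a point of $\Gamma\cap\PG(d,s)$ would be a nonzero $\GF(s)$-vector in $\ker\pi$, i.e.\ a nontrivial $\GF(s)$-linear relation $\sum_k c_k\ba_k=0$, contradicting the $\GF(s)$-independence of $\ba_0,\dots,\ba_d$ guaranteed by the reduction. Hence $\hd=\bar\pi(\PG(d,s))$ is the image of the canonical subgeometry $\PG(d,s)\subset\PG(d,q)$ under a projection from a center disjoint from it, which is exactly the definition of a projective $\GF(s)$-linear set (of rank $d+1$).

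I expect the only delicate point to be making the reduction to the smallest affine subspace rigorous together with the bookkeeping that identifies $\bar\pi|_{\PG(d,q)}$ as a genuine projection whose center $\Gamma$ has the stated dimension — everything else follows the Proposition's argument almost verbatim. (Alternatively, one could invoke the Proposition directly: $\hu\cup\hd$ is a projective $\GF(s)$-linear set and $\hd=(\hu\cup\hd)\cap\Pi_\infty$ is its intersection with the hyperplane at infinity; but then one needs the separate fact that a hyperplane section of a projective linear set is again a projective linear set, whose proof requires factoring the defining projection through the $\GF(q)$-span of the truncated subgeometry, so this route is not actually shorter.)
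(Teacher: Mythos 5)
Your proof is correct and follows essentially the same route as the paper: the paper marks this corollary as immediate from the Proposition, whose proof already exhibits $\hd$ as $\bar{\pi}\big(\PG(d,s)\big)$ for the projection $\bar{\pi}$ constructed there. You merely make explicit the two details the paper leaves implicit --- the harmless reduction to the affine span of $\hu$ and the disjointness of the center from the canonical subgeometry $\PG(d,s)$ (via the $\GF(s)$-independence of $\ba_0,\dots,\ba_d$) --- both of which are handled correctly.
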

\begin{rem}
In~\cite[Proposition~2.2]{polverino2009}, Olga Polverino proved that
if $\hd$ is a projective $\GF(s)$-linear set then
$|\hd|\equiv 1\pmod{s}$.
\qed
\end{rem}

The proposition above says that the set of directions determined by an
affine linear set is a projective linear set. The converse of this
proposition is also true; each projective linear set is a direction
set:
\begin{thm}
Embed $\PG(n,q)$ into $\PG(n\!+\!1,q)$ as the ideal hyperplane and let
$\AG(n\!+\!1,q)=\PG(n\!+\!1,q)\setminus\PG(n,q)$ denote the affine
part. \emph{For each} projective $\GF(s)$-linear set $\hd$ of rank
$d+1$ in $\PG(n,q)$, there exists an \emph{affine} $\GF(q)$-linear set
$\hu$ of rank $d+1$ in $\AG(n\!+\!1,q)$ such that the set of
directions determined by $\hu$ is $\hd$.
\end{thm}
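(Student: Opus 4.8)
The plan is to realize $\hd$ concretely as a projection and then "lift" the projection center into the affine part. By definition, the projective $\GF(s)$-linear set $\hd$ of rank $d+1$ in $\PG(n,q)$ is the image $\phi(\Sigma)$ of the canonical subgeometry $\Sigma\cong\PG(d,s)\subset\PG(d,q)$ under a projection $\phi$ from a center $C$ (a subspace of $\PG(d,q)$ of dimension $d-n-1$) disjoint from $\Sigma$. Embed this $\PG(d,q)$ as a hyperplane $H$ of $\PG(d+1,q)$, and let $\PG(n,q)$ (the ambient space of $\hd$) sit inside $H$ as before; we now have $\PG(n,q)\subset H=\PG(d,q)\subset\PG(d+1,q)$, with $\AG(n+1,q)=\PG(d+1,q)\setminus\PG(n,q)$ the affine space in which we must build $\hu$.

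First I would choose a point $O\in\AG(n+1,q)$, i.e.\ a point of $\PG(d+1,q)$ not in $\PG(n,q)$, and consider the cone with vertex $O$ over the center $C$; call its linear span $C'=\langle O,C\rangle$, a subspace of dimension $d-n$. The key point is that $C'\cap\PG(n,q)=C$ (since $O\notin H\supset\PG(n,q)$ forces the extra dimension to "escape" the ideal hyperplane), so projection from $C'$ is well-defined on points outside $C'$ and maps $\PG(d+1,q)\setminus C'$ onto a complementary $\PG(n,q)$. Next I would take $\hu$ to be the image under projection-from-$C'$ of the affine part of a suitable subgeometry: concretely, lift $\Sigma$ to a subgeometry $\Sigma'\cong\PG(d,s)$ of $\PG(d+1,q)$ meeting the ideal hyperplane exactly in $\Sigma$ — one way is to take $\Sigma'=\langle\Sigma,O'\rangle$-type constructions won't literally work since that adds a $\GF(q)$-point, so instead I would coordinatize: pick a basis so that $\Sigma=\PG(d,s)$ is the standard subgeometry of $H=\PG(d,q)$, adjoin one further affine coordinate, and let $\Sigma'$ be the standard $\PG(d,s)$ inside $\PG(d+1,q)$ using the first $d+1$ of the $d+2$ homogeneous coordinates together with one affine shift — the affine $\GF(q)$-linear set $\hu$ is then the $\GF(q)$-span of the $d+1$ lifted generators, which has rank $d+1$ and lies in $\AG(n+1,q)$.

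The heart of the argument is then to check that the directions determined by $\hu$ are exactly $\hd$. For this I would use the Proposition already proved in the excerpt: the directions determined by an affine $\GF(s)$-linear set of full affine rank form a projective $\GF(s)$-linear set arising from the same projection data; and more importantly the explicit description there of the projection $\bar\pi$ on the ideal hyperplane. Applied here with $s$ replaced by $q$: the affine $\GF(q)$-linear set $\hu$ of rank $d+1$ inside $\AG(n+1,q)$ determines as its direction set the projection of a certain $\PG(d,q)$ from a center — but I have rigged the construction so that this center is $C'$ and its image is $\phi(\Sigma)=\hd$. Concretely, a direction $P\in\PG(n,q)$ is determined by $\hu$ iff some line through $P$ meets $\hu$ twice, iff (pulling back through the rank-$(d+1)$ presentation) $P$ is the $\bar\pi$-image of a point spanned by two points of the lifted subgeometry, iff $P\in\phi(\Sigma)=\hd$; one inclusion is immediate and the reverse uses that $\Sigma$ has enough points (the subgeometry $\PG(d,s)$ spans $\PG(d,q)$) so that every point of $\hd$ is hit.

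The main obstacle I anticipate is purely bookkeeping: arranging the two embeddings — that of $\PG(n,q)$ as the ideal hyperplane of $\AG(n+1,q)$, and that of $\hd$ as a projection of $\PG(d,s)$ — so that a single projection from an $(d-n)$-dimensional center $C'$ simultaneously (i) restricts on the ideal hyperplane to the given projection $\phi$ realizing $\hd$, and (ii) sends the affine part of the standard $\PG(d,s)$-subgeometry onto an honest affine $\GF(q)$-linear set of the correct rank. This is exactly the kind of coordinate choice made in the proof of the Proposition above, run "in reverse," and I expect no conceptual difficulty beyond care with which coordinates are homogeneous, which are affine, and verifying $C'\cap\PG(n,q)=C$ so that no generator of $\hu$ accidentally lands at infinity — equivalently, that $\AG(n+1,q)$ really is the smallest affine subspace containing $\hu$, which is what lets the Proposition be applied.
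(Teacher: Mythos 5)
Your overall strategy is the right one and is essentially the paper's: realize $\hd$ as $\pi(\PG(d,s))$ for a projection $\pi:\PG(d,q)\rightarrow\PG(n,q)$ with center $C$ disjoint from the subgeometry, lift everything one dimension, and take $\hu$ to be the image of the affine part of a lifted subgeometry. But the execution has a genuine error in the choice of center. You enlarge the center to $C'=\langle O,C\rangle$ with $O$ an affine point, so $\dim C'=d-n$, and you yourself note that projection from $C'$ maps $\PG(d\!+\!1,q)\setminus C'$ onto an $n$-dimensional space. That is one dimension too low: $\hu$ must live in $\AG(n\!+\!1,q)\subset\PG(n\!+\!1,q)$, and a set landing in a complementary $\PG(n,q)$ cannot be the required affine set (it would live in the same space as $\hd$ itself). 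The correct move, which is what the paper does, is to keep the center equal to $C$ (dimension $d-n-1$), viewed inside the ideal hyperplane $\PG(d,q)$ of $\PG(d\!+\!1,q)$; then the extended $\bar\pi:\PG(d\!+\!1,q)\rightarrow\PG(n\!+\!1,q)$ restricts to $\pi$ on the ideal hyperplane, and --- precisely because the center is entirely at infinity --- $\bar\pi$ maps the affine part of the lifted subgeometry injectively into $\AG(n\!+\!1,q)$. Putting the affine point $O$ into the center destroys both the target dimension and this injectivity.

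There is a second problem in what you lift. The subgeometry to use is the canonical $\PG(d\!+\!1,s)\subset\PG(d\!+\!1,q)$, whose ideal part is the original $\PG(d,s)$ and whose affine part $\AG(d\!+\!1,s)$ is an affine $\GF(s)$-linear set of rank $d+1$; then $\hu=\bar\pi\big(\AG(d\!+\!1,s)\big)$ and the direction-preservation argument from the earlier Proposition finishes the proof. Your $\Sigma'\cong\PG(d,s)$ is one dimension short, and your final step of taking ``the $\GF(q)$-span of the $d+1$ lifted generators'' is not salvageable: a $\GF(q)$-span is a full affine subspace, whose direction set is an entire projective subspace, so it can never equal a general $\GF(s)$-linear set $\hd$. (The ``$\GF(q)$-linear'' in the theorem statement is a typo for ``$\GF(s)$-linear''; the paper's own construction produces a $\GF(s)$-linear set, and reading the $q$ literally is what led you astray here.)
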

\begin{proof}
$\hd\subset\PG(n,q)$ is the image of the canonical subgeometry
$\PG(d,s)\subset\PG(d,q)$ by the projection
$\pi:\PG(d,q)\rightarrow\PG(n,q)$ where the center $C$ of $\pi$ is
disjoint to this subgeometry. Embed $\PG(d,q)$ into $\PG(d\!+\!1,q)$
as the ideal hyperplane and extend $\pi$ to
$\bar{\pi}:\PG(d\!+\!1,q)\rightarrow\PG(n\!+\!1,q)$ such that its
center remains $C$. That is, the center is in the ideal
hyperplane. Consider the canonical subgeometry
$\PG(d\!+\!1,s)\subset\PG(d\!+\!1,q)$ and its image by $\bar{\pi}$.
\begin{equation*}
\begin{CD}
\PG(d\!+\!1,s)@>\subset>>\PG(d\!+\!1,q)@>\bar{\pi}>>\PG(n\!+\!1,q)\\
@VVV@VVV@VVV\\
\PG(d,s)@>\subset>>\PG(d,q)@>\pi>>\PG(n,q)
\end{CD}
\end{equation*}
The `ideal part' of this canonical subgeometry $\PG(d,s)$ is the
original canonical subgeometry $\PG(d,s)$ of $\PG(d,q)$ and the
projection $\bar{\pi}$ project this onto $\hd$. Since the center is
totally contained in the ideal hyperplane, $\bar{\pi}$ maps the affine
part of the canonical subgeometry $\PG(d\!+\!1,s)$ one-to-one.

\noindent
The directions determined by the affine part of $\PG(d\!+\!1,s)$ are
the points of $\PG(d,s)$ in the ideal hyperplane of
$\AG(d\!+\!1,q)$. Since the extended $\bar{\pi}$ pre\-serves
collinearity, the set of directions determined by the projected image
of the affine part is $\hd$.
\end{proof}

\section{The R\'edei polynomial of less than {\boldmath $q$} points}

Let $\hu$ be a set of less than $q$ affine points in $\AG(2,q)$ and
let $\hd$ denote the set of directions determined by $\hu$. Let
$n=|\hu|$ and let $R(X,Y)$ be the inhomogeneous affine R\'edei
polynomial of the affine set $\hu$, that is,
\begin{equation*}
R(X,Y)=\prod_{(a,b)\in\hu}(X-aY+b)=X^n+\sum_{i=0}^{n-1}\sigma_{n-i}(Y)X^{i}
\end{equation*}
where the abbreviation $\sigma_k(Y)$ means the $k$-th elementary
symmetric polynomial of the set $\{b-aY\;|\;(a,b)\in\hu\}$ of linear
polynomials.
\begin{prop}
If $y\in\hd$ then
$R(X,y)\in\GF(q)[X^{s(y)}]\setminus\GF(q)[X^{p\cdot s(y)}]$.

\noindent
If $y\notin\hd$ then $R(X,y)\mid X^q-X$.
\end{prop}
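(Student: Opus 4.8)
The plan is to factor $R(X,y)$ completely over $\GF(q)$ and to translate statements about which exponents occur into statements about the intersection numbers of lines of direction $y$ with $\hu$. Since
\[
R(X,y)=\prod_{(a,b)\in\hu}\bigl(X-(ay-b)\bigr),
\]
every root of $R(X,y)$ lies in $\GF(q)$, so we may write $R(X,y)=\prod_{\gamma\in\GF(q)}(X-\gamma)^{m_\gamma}$ with $\sum_\gamma m_\gamma=n$, where $m_\gamma=|\{(a,b)\in\hu:ay-b=\gamma\}|$. For each $\gamma$ the set $\{(a,b)\in\hu:ay-b=\gamma\}$ is exactly the set of points of $\hu$ on a certain affine line of direction $y$, and as $\gamma$ runs over $\GF(q)$ these lines run over all affine lines of direction $y$. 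Hence the multiset of nonzero exponents $m_\gamma$ coincides with the multiset of intersection sizes $|\ell\cap\hu|$ over the affine lines $\ell$ that have direction $y$ and meet $\hu$.

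The second assertion is then immediate: if $y\notin\hd$, every line of direction $y$ meets $\hu$ in at most one point, so all $m_\gamma\in\{0,1\}$ and $R(X,y)$ is a product of $n$ distinct linear factors $X-\gamma$ with $\gamma\in\GF(q)$; therefore $R(X,y)\mid\prod_{\gamma\in\GF(q)}(X-\gamma)=X^q-X$.

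For the first assertion, the key elementary observation is this: for a monic polynomial $P(X)=\prod_{\gamma\in\GF(q)}(X-\gamma)^{m_\gamma}\in\GF(q)[X]$ and an integer $f\ge 0$, one has $P\in\GF(q)[X^{p^f}]$ if and only if $p^f\mid m_\gamma$ for every $\gamma$. Indeed, if $p^f\mid m_\gamma$ for all $\gamma$, then $P=Q^{p^f}$ with $Q=\prod_\gamma(X-\gamma)^{m_\gamma/p^f}\in\GF(q)[X]$, and writing $Q=\sum_j c_jX^j$ gives $P=\sum_j c_j^{p^f}X^{jp^f}\in\GF(q)[X^{p^f}]$, since all mixed terms vanish in characteristic $p$. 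Conversely, if $P=\sum_j a_jX^{jp^f}$, then, the Frobenius being an automorphism of $\GF(q)$, we may extract $p^f$-th roots of the coefficients and obtain $P=\bigl(\sum_j a_j^{1/p^f}X^j\bigr)^{p^f}$; comparing this with the factorization of $P$ forces every $m_\gamma$ to be divisible by $p^f$.

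Apply this with $P=R(X,y)$. By definition $s(y)=p^e$ is the largest power of $p$ dividing all the nonzero $m_\gamma$ (the g.c.d.\ of the intersection numbers $|\ell\cap\hu|$ together with $q$), so $p^e\mid m_\gamma$ for every $\gamma$, whence $R(X,y)\in\GF(q)[X^{p^e}]=\GF(q)[X^{s(y)}]$. On the other hand $p^{e+1}$ cannot divide every nonzero $m_\gamma$: since $y\in\hd$ there is a line of direction $y$ through at least two points of $\hu$, so some $m_\gamma>0$, and $s(y)\mid m_\gamma\le n<q$ gives $s(y)=p^e<q=p^h$, i.e.\ $e+1\le h$; then, were $p^{e+1}$ to divide all nonzero $m_\gamma$, it would divide $\gcd(\{m_\gamma>0\}\cup\{p^h\})=s(y)=p^e$, a contradiction. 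Hence $R(X,y)\notin\GF(q)[X^{p^{e+1}}]=\GF(q)[X^{p\cdot s(y)}]$, completing the proof. The only step that requires any care is the equivalence in the third paragraph — that membership of a split polynomial in $\GF(q)[X^{p^f}]$ is the same as divisibility of all its root multiplicities by $p^f$ — which relies on $R(X,y)$ splitting over $\GF(q)$ together with the surjectivity of the Frobenius on $\GF(q)$; everything else is bookkeeping, once the exponents $m_\gamma$ are correctly identified with the numbers $|\ell\cap\hu|$ for lines $\ell$ of direction $y$.
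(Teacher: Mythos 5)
Your proof is correct and follows the same approach as the paper's: identify the roots of $R(X,y)$ with the affine lines of direction $y$ and their multiplicities with the intersection numbers $|\ell\cap\hu|$, then invoke the definition of $s(y)$. You merely make explicit (and correctly prove, via the Frobenius) the equivalence between ``all root multiplicities divisible by $p^f$'' and membership in $\GF(q)[X^{p^f}]$, which the paper leaves implicit.
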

\begin{proof}
Assume $y\in\hd$. Then the equation $R(X,y)=0$ has root $x$ with
multiplicity $m$ if there is a line with slope $y$ meeting $\hu$ in
exactly $m$ points. The value of $x$ determines this line. So each $x$
is either not a root of $R(X,y)$ or a root with multiplicity a
multiple of $s(y)$; and $p\cdot s(y)$ does not have this
property. Since $R$ is totally reducible, it is the product of its
root factors.

\noindent
If $y\notin\hd$ then a line with direction $y$ cannot meet $\hu$ in
more than one point, so an $x$ cannot be a multiple root of $R(X,y)$.
\end{proof}

\begin{nota}
Let $\uf$ be the polynomial ring $\GF(q)[Y]$ and consider $R(X,Y)$ as
the element of the univariate polynomial ring $\uf[X]$. Divide $X^q-X$
by $R(X,Y)$ as a univariate polynomial over $\uf$ and let $Q$ denote
the quotient and let $H+X$ be the negative of the remainder.
\begin{align*}
Q(X,Y)&=(X^q-X)\div R(X,Y) &\mbox{over}\quad\uf\\
-X-H(X,Y)&\equiv(X^q-X)\mod R(X,Y) &\mbox{over}\quad\uf
\end{align*}
So
\begin{equation*}
R(X,Y)Q(X,Y)=X^q+H(X,Y)=X^q+\sum_{i=0}^{q-1}h_{q-i}(Y)X^i
\end{equation*}
where $\deg_XH<\deg_XR$.
Let $\sigma^*$ denote the coefficients of $Q$,
\begin{equation*}
Q(X,Y)=X^{q-n}+\sum_{i=0}^{q-n-1}\sigma^*_{q-n-i}(Y)X^{i}
\end{equation*}
and so
\begin{equation*}
h_j(Y)=\sum_{i=0}^{j}\sigma_i(Y)\sigma^*_{j-i}(Y).
\end{equation*}
We know that $\deg h_i\le i$, $\deg\sigma_i\le i$ and
$\deg\sigma^*_i\le i$. Note that the $\sigma^*(Y)$ polynomials are not
necessarily elementary symmetric polynomials of linear polynomials
and if $y\in\hd$ then $Q(X,y)$ is not necessarily totally
reducible.
\end{nota}
\begin{rem}
Since $\deg_X H<\deg_X R$, we have $h_i= 0$ for $1\le i\le q-n$.
By definition, $\sigma_0=\sigma^*_0=1$. The equation $h_1=0$
implies $\sigma^*_1=-\sigma_1$, this fact and the equation $h_2=0$
implies $\sigma^*_2=-\sigma_2+\sigma_1^2$ and so on, the $q-n$
equations $h_i=0$ uniquely define all the coefficients
$\sigma^*_i$.
\end{rem}
\begin{prop}\label{prop:es}
If $y\in\hd$ then
$Q(X,y),H(X,y)\in\GF(q)[X^{s(y)}]$ and if $\deg R\le\deg Q$ then
$Q(X,y)\in\GF(q)[X^{s(y)}]\setminus\GF(q)[X^{p\cdot s(y)}]$.

\noindent
If $y\notin\hd$ then $R(X,y)Q(X,y)=X^q+H(X,y)=X^q-X$. In this case
$Q(X,y)$ is also a totally reducible polynomial.
\end{prop}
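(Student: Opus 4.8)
The plan is to establish each of the three assertions of Proposition~\ref{prop:es} separately, using the structural information about $R(X,y)$ coming from the previous proposition together with the defining identity $R(X,Y)Q(X,Y)=X^q+H(X,Y)$ read modulo the Frobenius structure.

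First I would treat the case $y\notin\hd$. Here the previous proposition gives $R(X,y)\mid X^q-X$, so the polynomial division of $X^q-X$ by $R(X,y)$ over $\GF(q)$ leaves remainder $0$; but division is compatible with specialization $Y\mapsto y$ (the leading coefficient of $R$ in $X$ is $1$, so no denominators appear), hence the remainder $-X-H(X,y)$ must equal $-X$, i.e.\ $H(X,y)=0$ and $R(X,y)Q(X,y)=X^q-X$. Since $X^q-X$ is a product of distinct linear factors over $\GF(q)$ and $R(X,y)$ divides it, the complementary factor $Q(X,y)$ is a product of a sub-collection of those same linear factors, hence totally reducible. That disposes of the second paragraph of the statement.

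Next, for $y\in\hd$: by the previous proposition $R(X,y)\in\GF(q)[X^{s(y)}]$, and of course $X^q-X=X^q-X$ also lies in $\GF(q)[X^{s(y)}]$ since $s(y)$ is a power of $p$ dividing $q$ (so $X^q=(X^{s(y)})^{q/s(y)}$). The ring $\GF(q)[X^{s(y)}]$ is closed under Euclidean division by a monic element of itself: if $A,B\in\GF(q)[X^{s(y)}]$ with $B$ monic, then $A=BQ'+R'$ forces $Q',R'\in\GF(q)[X^{s(y)}]$ (one sees this by the usual descending induction on $\deg_X A$, each step subtracting a monomial $cX^{s(y)k}B$). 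Applying this with $A=X^q-X$ (note $X^q-X \in \GF(q)[X^{s(y)}]$ because $s(y)\mid q$ and... wait, we need $s(y)\mid 1$, which is false; so instead observe $X^q-X=X(X^{q-1}-1)$ and more carefully that the relevant closure argument must be applied to $X^q-X$ directly). I would instead argue: write $R(X,y)=\tilde R(X^{s(y)})$ and divide $X^q$ by $R(X,y)$; the quotient and remainder-from-$X^q$ both lie in $\GF(q)[X^{s(y)}]$ by the closure lemma, and then the extra $-X$ term contributes $-X$ to $H$ and nothing structural, giving $Q(X,y),\,H(X,y)+X\in\GF(q)[X^{s(y)}]$; one checks the lone $-X$ is absorbed correctly so that in fact $Q(X,y),H(X,y)\in\GF(q)[X^{s(y)}]$, because $\deg_X(X^q-X)$ and $\deg_X R$ are both multiples of... this is the subtle point and is where I expect the main obstacle to lie. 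Finally, for the last clause, when $\deg R\le\deg Q$ one has $\deg_X Q(X,y)\ge \deg_X R(X,y)$ and $\deg_X(R Q)=q$; if $Q(X,y)$ were in $\GF(q)[X^{p\cdot s(y)}]$, then since $R(X,y)\in\GF(q)[X^{s(y)}]\setminus\GF(q)[X^{p\cdot s(y)}]$, the product would still fail to be a $p s(y)$-th power in $X$ (the lowest-degree term of $R$ not divisible by $p s(y)$ survives), contradicting $R(X,y)Q(X,y)=X^q+H(X,y)$ whose nonzero part $X^q$ is a $p s(y)$-th power — unless $H$ interferes; so I would phrase this via the standard observation that a polynomial lies in $\GF(q)[X^{p}]$ iff its formal derivative vanishes, and compute $\partial_X(RQ)=(\partial_X R)Q+R(\partial_X Q)=\partial_X(X^q+H)=\partial_X H$, then compare $X$-degrees: $\deg_X(\partial_X R\cdot Q)>\deg_X(R\cdot \partial_X Q)$ cannot happen if $\partial_X Q=0$ and $\partial_X R\neq 0$, forcing $\partial_X H$ to have the large degree $\deg_X(\partial_X R)+\deg_X Q$, which exceeds $\deg_X H<\deg_X R\le \deg_X Q$ — a contradiction.

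**The main obstacle** will be bookkeeping the stray $-X$ term: $X^q-X$ is \emph{not} itself a polynomial in $X^{s(y)}$ (its $X^1$ term obstructs this when $s(y)>1$), so one cannot invoke the closure lemma for $\GF(q)[X^{s(y)}]$ directly on $X^q-X$. The clean fix is to divide $X^q$ (which \emph{is} in $\GF(q)[X^{s(y)}]$, since $s(y)\mid q$) by $R(X,y)$, obtaining quotient $Q(X,y)\in\GF(q)[X^{s(y)}]$ and remainder $-\tilde H(X,y)\in\GF(q)[X^{s(y)}]$; then $X^q-X = R(X,y)Q(X,y) - \tilde H(X,y) - X$, so $H(X,y)=\tilde H(X,y)+X$ and $-X-H(X,y)=-X-\tilde H(X,y)-X$... here one must recheck against the Notation block's sign convention ($-X-H\equiv X^q-X$, i.e.\ $RQ=X^q+H$) to confirm that the identity forces $H(X,y)=\tilde H(X,y)$ with the $-X$ already accounted for on the left, whence $H(X,y)\in\GF(q)[X^{s(y)}]$ after all. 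I would double-check this sign reconciliation carefully in the final write-up, as it is the only place the argument could silently go wrong.
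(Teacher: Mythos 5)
Your handling of the case $y\notin\hd$ and of the membership $Q(X,y),H(X,y)\in\GF(q)[X^{s(y)}]$ is essentially sound and parallels the paper: the paper obtains the same conclusion from the recursion $h_i=0$ for $1\le i\le q-n$, which determines the $\sigma^*_i$ and is exactly your ``closure of $\GF(q)[X^{s(y)}]$ under division by a monic element'' in coefficient form; and your device of dividing $X^q$ rather than $X^q-X$ does correctly absorb the stray $-X$ (since $\deg_X X=1<n=\deg_X R$, the remainder of $X^q-X$ is the remainder of $X^q$ minus $X$, forcing $Q(X,y)=\tilde Q$ and $H(X,y)=\tilde H$). One small slip: for $y\notin\hd$ the specialized remainder is $0$, so $H(X,y)=-X$, not $H(X,y)=0$; your own concluding identity $R(X,y)Q(X,y)=X^q-X$ requires the former.

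The genuine gap is in the last clause. Your contradiction rests on $\partial_X R(X,y)\neq 0$, but this fails whenever $s(y)>1$: then $R(X,y)\in\GF(q)[X^{s(y)}]\subseteq\GF(q)[X^p]$, so $\partial_X R(X,y)=0$, and under your hypothesis $\partial_X Q(X,y)=0$ the identity $(\partial_X R)Q+R(\partial_X Q)=\partial_X H$ reads $0=0$ and yields nothing. The derivative argument must be transported to the variable $Z=X^{s(y)}$: writing $R(X,y)=\tilde R(Z)$, $Q(X,y)=\tilde Q(Z)$, $H(X,y)=\tilde H(Z)$, one has $\tilde R\tilde Q=Z^{q/s(y)}+\tilde H$ with $\partial_Z\tilde R\neq 0$ (this is precisely what $R(X,y)\notin\GF(q)[X^{p\cdot s(y)}]$ gives) and $\partial_Z Z^{q/s(y)}=0$ because $s(y)<q$ forces $p\mid q/s(y)$; now the assumption $\tilde Q\in\GF(q)[Z^p]$ gives $(\partial_Z\tilde R)\tilde Q=\partial_Z\tilde H$, whose left side is nonzero of $Z$-degree at least $\deg_Z\tilde Q\ge\deg_Z\tilde R>\deg_Z\tilde H$ — the hypothesis $\deg R\le\deg Q$ entering exactly here — which is the desired contradiction. (The paper instead exploits the symmetry of the division, viewing $R$ as $(X^q-X)\div Q$ when $\deg R\le\deg Q$ and rerunning the coefficient recursion with the roles of $R$ and $Q$ exchanged; your route is equally valid once corrected as above.)
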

\begin{proof}
If $y\in\hd$ then
\begin{equation*}
R(X,y)=X^n+\sum_{i=0}^{n-1}\sigma_{n-i}(y)X^{i}
\in\GF(q)[X^{s(y)}]\setminus\GF(q)[X^{p\cdot s(y)}].
\end{equation*}
So $s(y)\mid n$ and
$\sigma_{i}(y)\neq 0\Rightarrow s(y)\mid n-i\Rightarrow s(y)\mid i$.
The defining equation of $\sigma^*_i$ contains the sum of products of
some $\sigma_j$ where the sum of indices (counted with multiplicities)
is $i$. Since $\sigma_{j}(y)\neq 0$ only if $s(y)\mid j$, also
$\sigma^*_{i}(y)\neq 0$ only if $s(y)\mid i$.

\noindent
If $\deg R\le\deg Q$ then we can consider $R$ as $(X^q-X)\div Q$ and
the reminder is the same $H$.

\noindent
Since both $R(X,y)$ and $Q(X,y)$ are in $\GF(q)[X^{s(y)}]$,
$H(X,y)\in\GF(q)[X^{s(y)}]$.

\noindent
If $y\notin\hd$ then $R(X,y)\mid(X^q-X)$ in $\GF(q)[X]$ so $Q(X,y)$ is
also totally reducible.
\end{proof}
\begin{rem}
Note that $H(X,y)$ can be an element of $\GF(q)[X^{p\cdot s(y)}]$.
If $H(X,y)\equiv a$ is a constant polynomial, then
$R(X,y)Q(X,y)=X^q+a=X^q+a^q=(X+a)^q$. This means that $R(X,y)=(X+a)^n$
and thus, there exists exactly one line (corresponding to $X=-a$) of
direction $y$ that contains $\hu$, and so $\hd=\{y\}$.
\end{rem}
\begin{defi}
If $|\hd|\ge 2$ (i.e. $H(X,y)$ is not a constant polynomial) then for
each $y\in\hd$, let $t(y)$ denote the maximal power of $p$ such that
$H(X,y)=f_y(X)^{t(y)}$ for some $f_y(X)\notin\GF(q)[X^p]$.
\begin{equation*}
H(X,y)\in\GF(q)[X^{t(y)}]\setminus\GF(q)[X^{t(y)p}].
\end{equation*}
In this case $t(y)<q$ since $t(y)\le\deg_X H<q$.
Let $t$ be the greatest common divisor of the numbers $t(y)$, that is,
\begin{equation*}
t=\gcd_{y\in\hd}t(y)=\min_{y\in\hd}t(y).
\end{equation*}
If $H(X,y)\equiv a$ (i.e. $\hd=\{y\}$) then we define $t=t(y)=q$.
\end{defi}
\begin{rem}
If there exists at least one determined direction $y\in\hd$ such that
$H(X,y)$ is not constant then $t<q$. From
\qwy{Proposition~\ref{prop:es}} we have
$s(y)\le t(y)$ for all $y\in\hd$, so $s\le t$.\qed
\end{rem}
\begin{prop}\label{prop:lin}
Using the notation above,
\begin{equation*}
R(X,Y)Q(X,Y)=X^q+H(X,Y)\in\spa_\uf\langle
1,X,X^{t},X^{2t},X^{3t},\dots,X^{q}\rangle.
\end{equation*}
\end{prop}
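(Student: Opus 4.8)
The plan is to show that every coefficient $h_j(Y)$ of $H(X,Y)$ with $1 < j < q$ and $t \nmid j$ is the zero polynomial, which is exactly the claim that $X^q+H(X,Y)$ lies in $\spa_\uf\langle 1,X,X^t,X^{2t},\dots,X^q\rangle$. The key point is that a polynomial over the ring $\uf=\GF(q)[Y]$ vanishes identically if and only if it vanishes after every substitution $Y=y$ with $y\in\GF(q)$; so I would argue substitution-by-substitution and split into the cases $y\in\hd$ and $y\notin\hd$, using \qwy{Proposition~\ref{prop:es}}.

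First, the non-determined directions. If $y\notin\hd$ then \qwy{Proposition~\ref{prop:es}} gives $X^q+H(X,y)=X^q-X$, so $h_j(y)=0$ for every $j$ with $1<j<q$ (and $h_1(y)=-1$); in particular $h_j(y)=0$ whenever $t\nmid j$ and $j\neq 1$. Next, the determined directions. If $y\in\hd$ and $\hd=\{y\}$ then $t=q$ and there is nothing to prove beyond $\deg_X H<q$, which holds by construction. Otherwise $H(X,y)$ is non-constant, and by definition of $t(y)$ we have $H(X,y)\in\GF(q)[X^{t(y)}]$, hence $h_j(y)=0$ whenever $t(y)\nmid j$; since $t=\gcd_{y\in\hd}t(y)$ divides $t(y)$, the condition $t\nmid j$ forces $t(y)\nmid j$, so again $h_j(y)=0$. (One should also note $h_1(Y)=-\sigma_1+\sigma^*_1=0$ wait — actually $h_1=\sigma_0\sigma^*_1+\sigma_1\sigma^*_0$, and from the remark $h_i=0$ for $1\le i\le q-n$, while for the top the exponent $1$ is explicitly allowed in the span, so the coefficient of $X^1$ needs no argument.)

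Combining the two cases: for each exponent $j$ with $1<j<q$ and $t\nmid j$, the polynomial $h_j(Y)\in\GF(q)[Y]$ satisfies $h_j(y)=0$ for all $y\in\GF(q)$. Since $\deg h_j\le j<q$, a polynomial of degree less than $q$ with $q$ roots is identically zero, so $h_j\equiv 0$. Therefore the only exponents of $X$ that can occur in $X^q+H(X,Y)$ with nonzero coefficient are $0$, $1$, multiples of $t$, and $q$ itself, which is the assertion of \qwy{Proposition~\ref{prop:lin}}.

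The main obstacle is purely bookkeeping: making sure the three special exponents ($0$, $1$, and $q$) are handled correctly — the constant term and the $X^q$ term are always allowed in the span, and the $X^1$ term is explicitly allowed, which is why the case analysis only needs to kill $h_j$ for $1<j<q$ with $t\nmid j$. The substantive inputs (that $R(X,y)Q(X,y)\in\GF(q)[X^{s(y)}]$ and that $H(X,y)\in\GF(q)[X^{t(y)}]$) are already in hand from \qwy{Proposition~\ref{prop:es}} and the definition of $t(y)$, so no new algebra is required; the degree bound $\deg h_j\le j$ was noted in the Notation block and finishes the argument.
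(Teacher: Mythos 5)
Your argument is essentially the paper's own proof: substitute each $y\in\GF(q)$, use $H(X,y)=-X$ for $y\notin\hd$ and $H(X,y)\in\GF(q)[X^{t(y)}]$ with $t\mid t(y)$ for $y\in\hd$, and conclude that each offending coefficient is a polynomial in $Y$ of degree less than $q$ with $q$ roots, hence identically zero. The only blemish is the degenerate case $\hd=\{y\}$, where ``nothing to prove beyond $\deg_X H<q$'' is not quite right (the span is then $\spa_\uf\langle 1,X,X^q\rangle$, so the coefficients of $X^j$ for $1<j<q$ still need to vanish), but your own substitution argument covers it, since $H(X,y)$ is constant for the unique determined direction and $H(X,z)=-X$ for every other $z$.
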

\begin{proof}
If $|\hd|=\{y\}$ then $H(X,y)\equiv a$ and $H(X,z)=-X$ for $z\neq y$.

\noindent
Suppose that $|\hd|\ge 2$. If $y\notin\hd$ then $X^q+H(X,y)=X^q-X$ and
if $y\in\hd$ then
$X^q+H(X,y)\in\GF(q)[X^{t(y)}]\setminus\GF(q)[X^{t(y)p}]$.

\noindent
Thus, in both cases, if $i\neq 1$ and $i\nmid t$, then $h_{q-i}(Y)$ has
$q$ roots and its degree is less than $q$.
\end{proof}

\section{Bounds on the number of directions}

Although, in the original problem, the vertical direction $\infty$ was
not determined, from now on, without loss of generality we suppose
that $\infty$ is a determined direction (if not, we apply an affine
collineation). We continue to suppose that there is at least one
non-determined direction.
\begin{lemma}\label{lem:egyes}
If $\infty\in\hd\subsetneqq\ell_\infty$ then
$|\hd|\ge\deg_X H(X,Y)+1$.
\end{lemma}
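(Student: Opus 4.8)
The plan is to substitute the non-determined directions into the identity $R(X,Y)Q(X,Y)=X^{q}+H(X,Y)$ and then count roots in the variable $Y$. First I would use the hypothesis $\infty\in\hd$ to note that every non-determined direction lies in $\GF(q)$; since $\hd$ is a \emph{proper} subset of $\ell_{\infty}$, there are therefore exactly $q+1-|\hd|\ge 1$ non-determined directions, forming a set $N\subseteq\GF(q)$. (This is exactly where the normalisation $\infty\in\hd$ pays off: it turns \emph{all} $q+1-|\hd|$ non-determined directions into admissible substitutions for $Y$, and this is what produces the ``$+1$'' in the bound rather than merely $|N|=q-|\hd|$.) Throughout I would assume $|\hd|\ge 2$; the remaining possibility $|\hd|=1$ forces $\hu$ to be collinear and is handled separately.

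Next I would invoke Proposition~\ref{prop:es}: for $y\notin\hd$ one has $R(X,y)Q(X,y)=X^{q}+H(X,y)=X^{q}-X$, i.e.\ $H(X,y)=-X$. Writing $H(X,Y)=\sum_{i=0}^{q-1}h_{q-i}(Y)X^{i}$, this means that for every $i$ the coefficient of $X^{i}$ in $H(X,Y)+X$ vanishes at all $|N|=q+1-|\hd|$ points of $N$; and for $i\ge 2$ that coefficient is simply $h_{q-i}(Y)$, since adding $X$ affects only the coefficient of $X^{1}$.

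Then I would set $D=\deg_{X}H$. If $D\le 1$ the desired inequality $|\hd|\ge D+1$ is immediate from $|\hd|\ge 2$, so assume $D\ge 2$. By definition of $D$ the polynomial $h_{q-D}(Y)\in\GF(q)[Y]$ is nonzero, and by the previous paragraph it has the $|N|=q+1-|\hd|$ distinct elements of $N$ among its roots, so $\deg_{Y}h_{q-D}\ge q+1-|\hd|$. On the other hand the elementary degree estimate $\deg h_{i}\le i$ recorded in the Notation block gives $\deg_{Y}h_{q-D}\le q-D$. Combining,
\begin{equation*}
q-D\ \ge\ \deg_{Y}h_{q-D}\ \ge\ q+1-|\hd|,
\end{equation*}
whence $|\hd|\ge D+1=\deg_{X}H+1$, as claimed.

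I do not anticipate a genuine obstacle: modulo the two inputs just named (the clean factorisation $H(X,y)=-X$ for non-determined $y$ from Proposition~\ref{prop:es}, and $\deg h_{i}\le i$), the argument is pure root-counting over the field $\GF(q)$. The only points that need care are the bookkeeping of which coefficient of $H$ corresponds to which power of $X$, and keeping straight that it is precisely $\infty\in\hd$ which makes the number of usable (affine) non-determined directions equal to $q+1-|\hd|$.
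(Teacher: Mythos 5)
Your proof is correct and follows essentially the same route as the paper's: both substitute the $q+1-|\hd|$ non-determined (necessarily affine, since $\infty\in\hd$) directions into $H$, observe that the top $X$-coefficient $h_{q-\deg_XH}$ vanishes at all of them, and play this off against the degree bound $\deg h_i\le i$. The only divergence is at the margins: you set aside $|\hd|=1$ for a separate (unwritten) argument while the paper instead uses a finite determined direction to produce some $h_i\not\equiv 0$ with $i\neq q-1$ --- and indeed for $\hd=\{\infty\}$ the stated inequality can degenerate (two points on a vertical line give $H=-X$, $\deg_XH=1$), so both arguments tacitly require $|\hd|\ge 2$; your explicit isolation of that case is, if anything, the more careful bookkeeping.
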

\begin{proof}
If $y\notin\hd$ then $R(X,y)\mid X^q-X$, thus $H(X,y)=-X$ and thus
$\forall i\neq q-1$: $h_i(y)=0$.

\noindent
If $y\in\hd$ then $R(X,y)\nmid X^q-X$, hence $\exists i\neq q-1$:
$h_i(y)\neq 0$ and thus $h_i\not\equiv 0$. Let $i$ be the smallest
index such that $h_i\not\equiv 0$ and so $i=q-\deg_X H$. Since
$h_i\not\equiv 0$ has at least $(q+1)-|\hd|=q-(|\hd|-1)$ roots,
$\deg_Yh_i\ge q-|\hd|+1$.
\begin{equation*}
\hfill\mbox{Now }
q\ge\deg X^{q-i}h_i(Y)=q-i+\deg_Yh_i\ge 2q-|\hd|+1-i.
\hfill
\end{equation*}
Hence $|\hd|\ge q+1-i=\deg_X H+1$. 
\end{proof}

\begin{lemma}\label{lem:kettes}
Let $\kappa(y)$ denote the number of the roots of $X^q+H(X,y)$ in
$\GF(q)$, counted with multiplicity.
If $X^q+H(X,y)\neq X^q-X$ and if $H(X,y)$ is not a constant
polynomial, then
\begin{equation*}
\frac{\kappa(y)-1}{t(y)+1}+1=
\frac{\kappa(y)+t(y)}{t(y)+1}\le
t(y)\cdot\deg f_y(X)=\deg_X H \le \deg H
\end{equation*}
\end{lemma}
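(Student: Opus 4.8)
The plan is to strip the $p$-power part out of $X^q+H(X,y)$ and reduce the estimate to a statement about one polynomial of smaller degree. Since $H(X,y)$ is not constant, $t(y)$ is a power of $p$ with $1\le t(y)\le\deg_X H(X,y)<q$; hence $t(y)$ divides $q$ and $p$ divides $q/t(y)$. Writing $H(X,y)=f_y(X)^{t(y)}$ with $f_y\notin\GF(q)[X^p]$ as in the definition, and putting $g_y(X):=X^{q/t(y)}+f_y(X)$, the Frobenius identity gives
$$X^q+H(X,y)=X^q+f_y(X)^{t(y)}=\bigl(X^{q/t(y)}+f_y(X)\bigr)^{t(y)}=g_y(X)^{t(y)},$$
with $\deg g_y=q/t(y)$ because $\deg f_y=\deg_X H(X,y)/t(y)<q/t(y)$. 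Every element of $\GF(q)$ occurs in $X^q+H(X,y)$ with multiplicity exactly $t(y)$ times its multiplicity in $g_y$, so $\kappa(y)=t(y)\,\lambda(y)$, where $\lambda(y)$ denotes the number of roots of $g_y$ in $\GF(q)$ counted with multiplicity. Substituting $\kappa(y)=t(y)\lambda(y)$ and cancelling $t(y)$, the whole chain of inequalities to be proved is equivalent to $\lambda(y)\le(t(y)+1)\deg f_y-1$, so it suffices to establish this.

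Next I would bound $\lambda(y)$ by splitting it as $\lambda(y)=D+\sum_i(m_i-1)$, where $x_1,\dots,x_D$ are the \emph{distinct} roots of $g_y$ in $\GF(q)$ and $m_i$ is the multiplicity of $x_i$. For the first summand: if $x\in\GF(q)$ then $x^q=x$, so $g_y(x)^{t(y)}=x^q+H(x,y)=x+H(x,y)$; thus $x_1,\dots,x_D$ are exactly the roots in $\GF(q)$ of the polynomial $X+H(X,y)$, which is nonzero precisely because $X^q+H(X,y)\ne X^q-X$. Hence $D\le\deg\bigl(X+H(X,y)\bigr)\le\deg_X H(X,y)=t(y)\deg f_y$. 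For the second summand: since $p$ divides $q/t(y)$, differentiation annihilates the term $X^{q/t(y)}$, so $g_y'=f_y'$, which is nonzero because $f_y\notin\GF(q)[X^p]$, and $\deg g_y'=\deg f_y'\le\deg f_y-1$. As $(X-x_i)^{m_i-1}$ divides $g_y'$ for each $i$ (immediate from $g_y=(X-x_i)^{m_i}h$ with $h(x_i)\ne0$, and valid even when $p\mid m_i$), and these factors are pairwise coprime, we get $\sum_i(m_i-1)\le\deg g_y'\le\deg f_y-1$. Adding the two bounds yields $\lambda(y)\le t(y)\deg f_y+\deg f_y-1=(t(y)+1)\deg f_y-1$, which is exactly what was needed; reversing the first step recovers the displayed inequality.

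It remains to dispatch the identities framing the inequality. The first equality, $\tfrac{\kappa(y)-1}{t(y)+1}+1=\tfrac{\kappa(y)+t(y)}{t(y)+1}$, is arithmetic; the equality $t(y)\cdot\deg f_y(X)=\deg_X H$ is the definition of $f_y$ together with $H(X,y)=f_y(X)^{t(y)}$; and $\deg_X H\le\deg H$ holds since the degree of $H$ in $X$ alone cannot exceed its total degree. The step I expect to demand the most care is the excess-multiplicity estimate: one has to be sure that $g_y'$ is genuinely nonzero and of degree at most $\deg f_y-1$, and that $(X-x_i)^{m_i-1}\mid g_y'$ holds with no coprimality-to-$p$ hypothesis on $m_i$. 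Both points rest on the two features of the $p$-power normalization — that $t(y)$ is \emph{maximal} (so $f_y\notin\GF(q)[X^p]$, hence $f_y'\ne0$) and that $t(y)<q$ (so $p\mid q/t(y)$ and the leading term of $g_y$ vanishes under differentiation); if either failed, $\deg g_y'$ could be as large as $q/t(y)-1$ and the estimate would collapse.
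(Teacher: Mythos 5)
Your proposal is correct and follows essentially the same route as the paper: you factor $X^q+H(X,y)=\bigl(X^{q/t(y)}+f_y(X)\bigr)^{t(y)}$, bound the distinct $\GF(q)$-roots via the polynomial $X+H(X,y)$ (the paper's factor $a(X)\mid f_y^{t(y)}+X$) and the excess multiplicities via $g_y'=f_y'\neq 0$ (the paper's factor $b(X)\mid\partial_X f_y$), and add the two degree bounds. The splitting $\lambda(y)=D+\sum_i(m_i-1)$ is exactly the paper's decomposition $a(X)b(X)$, so no substantive difference.
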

\begin{proof}
Fix $y\in\hd$ and utilize that $X^q+H(X,y)\in\GF(q)[X^{t(y)}]$, thus
\begin{equation*}
\Big(X^{q/t(y)}+f_y(X)\Big)^{t(y)}=X^q+H(X,y)=\Big(a(X)\cdot b(X)\cdot c(X)\Big)^{t(y)}
\end{equation*}
where the totally reducible $a(X)$ contains all the roots (in
$\GF(q)$) without multiplicity, the totally reducible $b(X)$ contains
the further roots (in $\GF(q)$), and $c(X)$ has no root in $\GF(q)$.
(Note that $t(y)<q$ so $X^{q/t(y)}\in\GF(q)[X^p]$.)
\begin{gather*}
\left.
\begin{aligned}
a(X) &\mid (X^q-X)\\
a(X) &\mid \left(X^q+f_y(X)^{t(y)}\right)
\end{aligned}
\right\}
\begin{aligned}[t]
\Longrightarrow
a(X) &\mid\left(f_y(X)^{t(y)}+X\right)\\
b(X) &\mid\partial_X\left(X^{q/t(y)}+f_y(X)\right)=\partial_X f_y(X)\\
a(X)b(X) &\mid \left(f_y(X)^{t(y)}+X\right)\partial_X f_y(X)
\end{aligned}
\end{gather*}
And so,
$\deg(a(X)b(X))\le t(y)\cdot\deg f_y +\deg f_y -1=(t(y)+1)\cdot\deg f_y-1$,
since $\partial_X f_y(X)\neq 0$ and $f_y(X)^{t(y)}=H(X,y)\neq -X$.
We get
\begin{equation*}
\frac{\kappa(y)+t(y)}{t(y)+1}=
t(y)\frac{\deg(a(X)b(X))+1}{t(y)+1}\le
t(y)\cdot\deg f_y(X)
\end{equation*}
using $\kappa(y)=t(y)\cdot\deg(a(X)b(X))$.
\end{proof}

Using these lemmas above we can prove a theorem similar to
\qwy{Theorem~\ref{thm:ball}} but it is weaker in our case.
\begin{thm}\label{thm:m}
Let $\hu\subset\AG(2,q)$ be an arbitrary set of points and let $\hd$
denote the directions determined by $\hu$. We use the notation $s$ and
$t$ defined above geometrically and algebraically,
respectively. Suppose that $\infty\in\hd$. One of the following holds:
\begin{align*}
&\mbox{\textbf{\emph{either}}}& 1=s\le t&<q\quad\mbox{and}&
\dfrac{|\hu|-1}{t+1}+2\le\left|\hd\right|&\le q+1;\\
&\mbox{\textbf{\emph{or}}}& 1<s\le t&<q\quad\mbox{and}&
\dfrac{|\hu|-1}{t+1}+2\le\left|\hd\right|&\le\dfrac{|\hu|-1}{s-1};\\
&\mbox{\textbf{\emph{or}}}& 1\le s\le t&=q\quad\mbox{and}&\hd\;&=\{\infty\}.
\end{align*}
\end{thm}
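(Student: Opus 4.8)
The plan is to handle the three cases according to the value of $t$, splitting first on whether $\hd$ is a single direction.

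First I would dispose of the degenerate case. If $\hd=\{\infty\}$, then by the Remark preceding the definition of $t(y)$, we have $H(X,\infty)\equiv a$ constant, $R(X,\infty)Q(X,\infty)=(X+a)^q$, and $t=q$; this is exactly the third alternative, with $\hu$ collinear. Conversely, if $t=q$ then (by the Remark following Proposition \ref{prop:es}, together with the definition of $t$) there is no determined direction $y$ with $H(X,y)$ non-constant, forcing $|\hd|=1$, i.e. $\hd=\{\infty\}$. So from now on assume $|\hd|\ge 2$ and $t<q$; the inequality $s\le t<q$ is already recorded in the Remark after the definition of $t$.

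Next I would prove the \emph{lower} bound $\frac{|\hu|-1}{t+1}+2\le|\hd|$, which is common to the first two alternatives. The idea is to combine Lemma \ref{lem:egyes} with Lemma \ref{lem:kettes}. Pick a determined direction $y\in\hd$, $y\ne\infty$, realizing $t(y)=t$ (or at least with $H(X,y)$ non-constant); since $\infty\in\hd$ and we assumed there is a non-determined direction, $\hd\subsetneqq\ell_\infty$, so Lemma \ref{lem:egyes} gives $|\hd|\ge\deg_X H(X,Y)+1$, where $\deg_X H$ is taken over the polynomial ring $\uf=\GF(q)[Y]$. On the other hand, the specialization $X^q+H(X,y)$ has $\kappa(y)=|\hu|$ roots in $\GF(q)$ counted with multiplicity — each affine point of $\hu$ lies on the line through it in direction $y$, contributing one root of $R(X,y)$, and $Q(X,y)$ contributes no roots in $\GF(q)$ beyond possibly... actually I must argue $\kappa(y)\ge|\hu|$ simply because $R(X,y)$ is totally reducible with exactly $|\hu|$ roots in $\GF(q)$ counted with multiplicity and $R\mid X^q+H$; hence $\kappa(y)\ge n=|\hu|$. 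Feeding $\kappa(y)\ge|\hu|$ into Lemma \ref{lem:kettes} yields $\frac{|\hu|-1}{t+1}+1\le\frac{\kappa(y)+t(y)}{t(y)+1}\le\deg_X H$ — here I use that the function $u\mapsto\frac{u-1}{u+1}$ is increasing so $t(y)=t$ in the denominator is the right (smallest) choice, and that $\deg_X H$ as a bivariate quantity dominates $\deg_X H(X,y)$. Combining with Lemma \ref{lem:egyes} gives $|\hd|\ge\deg_X H+1\ge\frac{|\hu|-1}{t+1}+2$, as required.

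Finally the \emph{upper} bounds. For the first alternative ($s=1$) the bound $|\hd|\le q+1$ is trivial since $\hd\subseteq\ell_\infty$. For the second alternative ($s>1$), I would argue as in Remark \ref{rem:1mods}: when $s\ge p$, every line of $\PG(2,q)$ meets $\hu\cup\hd$ in $0$ or $1\pmod s$ points, and $|\hu|\equiv 0$, $|\hd|\equiv 1\pmod s$. Counting incidences between the points of $\hd$ on $\ell_\infty$ and the affine lines through a fixed affine point $P\in\hu$ in a determined direction: each such line carries $\equiv 1\pmod s$ points of $\hu\cup\hd$, hence at least one further point of $\hu$ besides $P$... the cleanest route is the standard double count giving $|\hu|\ge (s-1)(|\hd|-1)+1$, i.e. $|\hd|-1\le\frac{|\hu|-1}{s-1}$. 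Concretely: fix $P\in\hu$; the $|\hd|$ lines through $P$ in determined directions each meet $\hu$ in a multiple of $s$ points, so in at least $s$ points when the direction is genuinely determined from $P$, and these lines partition $\hu\setminus\{P\}$; but one must be slightly careful that every determined direction is "seen" from $P$ — if not, that direction's lines through $P$ meet $\hu$ only in $P$, and one instead sums $|\ell\cap\hu|-1\ge s-1$ over the $|\hd|$ lines, using $|\ell\cap\hu|\equiv 0$ or $1\pmod s$ appropriately. Summing $|\ell\cap\hu\setminus\{P\}|$ over all $|\hd|$ lines through $P$ with determined slope gives $|\hu|-1\ge(s-1)|\hd|$ in the worst case, which is even stronger; I would state it carefully to land exactly at $|\hd|\le\frac{|\hu|-1}{s-1}$.

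\medskip

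The main obstacle I anticipate is the bookkeeping in the last paragraph: getting the exact constant in the upper bound $|\hd|\le\frac{|\hu|-1}{s-1}$ right, since determined directions need not all be visible from a single affine point, so the naive "$s$ points per line through $P$" count can fail and one must fall back on the $1\pmod s$ property of $\hu\cup\hd$ on every line and a more symmetric incidence count. A secondary technical point is justifying $\kappa(y)\ge|\hu|$ cleanly from total reducibility of $R(X,y)$ and $R\mid X^q+H$, and confirming that $\deg_X H$ (over $\uf$) is the quantity that both Lemma \ref{lem:egyes} and Lemma \ref{lem:kettes} refer to, so the two inequalities genuinely chain together.
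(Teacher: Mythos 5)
Your proposal is correct and follows essentially the same route as the paper: the trivial $t=q$ case, the lower bound via Lemma~\ref{lem:egyes} chained with Lemma~\ref{lem:kettes} at a direction $y$ with $t(y)=t$ and $\kappa(y)\ge|\hu|$, and the upper bound for $s>1$ by counting $\hu\setminus\{P\}$ on the $|\hd|$ lines through a point $P\in\hu$ with determined slope. The only caveat you raise that is a non-issue: every determined direction is automatically ``seen'' from $P$, since the line through $P$ with slope $y\in\hd$ meets $\hu$ in a positive multiple of $s(y)\ge s$ points, so the clean count $|\hu|-1\ge(s-1)|\hd|$ goes through directly.
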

\begin{proof}
The third case is trivial ($t=q$ means $|\hd|=1$, by the
definition of $t$).

\noindent
Let $P$ be a point of $\hu$ and consider the lines connecting $P$ and
the ideal points of $\hd$. Since each such line meets $\hu$ and has a
direction determined by $\hu$, it is incident with $\hu$ in a multiple
of $s$ points. If $s>1$ then counting the points of $\hu$ on these
lines we get the upper bound.

\noindent
If $t<q$ then we can choose a direction $y\in\hd$ such that the
conditions of \qwy{Lemma~\ref{lem:kettes}} hold.
Using \qwy{Lemma~\ref{lem:egyes}} and
\qwy{Lemma~\ref{lem:kettes}}, we get
\begin{equation*}
|\hd|\ge\deg_X H(X,Y)+1\ge\dfrac{\kappa(y)-1}{t(y)+1}+1+1.
\end{equation*}
The number of roots of $R(X,y)Q(X,y)$ is at least the number of
roots of $R(X,y)$ which equals to $|\hu|$. Thus
$\kappa(y)\ge|\hu|$. And thus
\begin{equation*}
\dfrac{\kappa(y)-1}{t(y)+1}\ge\dfrac{|\hu|-1}{t+1}.
\qedhere
\end{equation*}
\end{proof}
An affine collineation converts Sz\H{o}nyi's and Blokhuis'
\qwy{Theorem~\ref{thm:sztaab}} to the special case
of our \qwy{Theorem~\ref{thm:m}}, since $t$ is equal to
either $1$ or $p$ in the case $q=p$ prime.

In the case $q>p$, the main problem of
\qwy{Theorem~\ref{thm:m}} is, that the
definition of $t$ is non-geometrical. Unfortunately, $t=s$
does not hold in general. For example, let $\hu$ be a
$\GF(p)$-linear set minus one point. In this case $s=1$, but
$t=p$. In the rest of this article, we try to describe this problem.

\section{Maximal affine sets}

One can easily show that a \emph{proper} subset of the affine set
$\hu$ can determine the same directions. (For example, let $\hu$ be
an affine subplane over the subfield $\GF(s)$. Arbitrary $s+1$ points
of $\hu$ determine the same directions.) 
\begin{defi}[Maximal affine set]
We say that $\hu\subseteq\AG(2,q)$ is a \emph{maximal} affine set that
determines the set $\hd\subseteq\ell_{\infty}\cong\PG(1,q)$ of
directions if each affine set that contains $\hu$ as a \emph{proper}
subset determines \emph{more than} $|\hd|$ directions.
\end{defi}
\textrm{Tam\'as Sz\H{o}nyi} proved a `completing theorem'
(stability result) in~\cite{Szonyi1996}, which was
slightly generalized in~\cite{Sziklai1999} as follows.
\begin{thm}[Sz\H{o}nyi; Sziklai]
{\rm \cite[Theorem~3.1]{Sziklai1999}}
Let $\hd$ denote the set of directions determined by the affine set
$\hu\subset\AG(2,q)$ containing $q-\varepsilon$ points, where
$\varepsilon<\alpha\sqrt{q}$ and $|\hd| < (q+1)(1-\alpha)$,
$1/2<\alpha<1$. Then $\hu$ can be extended to a set $\hu'$ with
$|\hu'|=q$ such that $\hu'$ determines the same directions.
\qed
\end{thm}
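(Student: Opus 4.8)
We outline an argument through the R\'edei polynomial. Since $|\hd|<(q+1)(1-\alpha)<q+1$, the set $\hu$ leaves at least one direction non-determined, so after an affine collineation we may assume $\infty\notin\hd$; then $\hu$ is the graph of a partial function, $\hu=\{(a,f(a))\mid a\in A\}$ with $A\subseteq\GF(q)$ and $|A|=q-\varepsilon$. As before, form $R(X,Y)=\prod_{a\in A}(X-aY+f(a))$, of $X$-degree $q-\varepsilon$, and the quotient $Q(X,Y)$ with $R(X,Y)Q(X,Y)=X^q+H(X,Y)$, where $\deg_X Q=\varepsilon$ and $\deg_Y Q\le\varepsilon$ (the coefficients $\sigma^*_i$ of $Q$ have degree $\le i$). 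For every non-determined direction $y$ we have $R(X,y)\mid X^q-X$, hence $R(X,y)Q(X,y)=X^q-X$, so $Q(X,y)$ is square-free and totally reducible over $\GF(q)$: its $\varepsilon$ roots are exactly the first R\'edei coordinates of the $\varepsilon$ points missing from the lines of slope $y$.

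The goal is to prove that $Q(X,Y)=\prod_{j=1}^{\varepsilon}(X-c_jY+d_j)$ with $c_1,\dots,c_\varepsilon$ pairwise distinct, $\{c_1,\dots,c_\varepsilon\}\cap A=\varnothing$, and $d_j\in\GF(q)$. Granting this, put $\mathcal V=\{(c_j,d_j)\mid 1\le j\le\varepsilon\}$ and $\hu'=\hu\cup\mathcal V$: the R\'edei polynomial of $\hu'$ is $R(X,Y)Q(X,Y)=X^q+H(X,Y)$, which at every non-determined $y$ specialises to the square-free polynomial $X^q-X$ of degree $q$. Reading this off, $|\hu'|=q$ (so $\mathcal V\cap\hu=\varnothing$ and $\{c_1,\dots,c_\varepsilon\}=\GF(q)\setminus A$, whence all first coordinates of $\hu'$ are distinct and $\infty\notin\hd'$), and every line of slope a non-determined $y$ meets $\hu'$ in exactly one point, so no such $y$ belongs to $\hd'$. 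Together with $\hd\subseteq\hd'$ (forced by $\hu\subseteq\hu'$), this gives $\hd'=\hd$, which is the assertion.

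Everything thus reduces to the factorization of $Q$, which is the crux. Consider the plane curve $\mathcal C\colon Q(X,Y)=0$, of total degree at most $\varepsilon$. For each of the $M:=q+1-|\hd|>\alpha(q+1)$ non-determined directions $y$, the line $Y=y$ meets $\mathcal C$ in exactly $\varepsilon$ distinct $\GF(q)$-rational points, and no two irreducible components of $\mathcal C$ can pass through one of these points (that would produce a repeated root of $Q(\cdot,y)$). Hence $\mathcal C$ is square-free, and each irreducible component $C_i$, taken monic in $X$ of $X$-degree $e'_i$ and total degree $e_i$, has, over every non-determined $y$, a totally split $\GF(q)$-fibre; in particular $C_i$ carries at least $e'_iM$ rational points, while Hasse--Weil (and the $O(e_i^2)$ bound when $C_i$ is not absolutely irreducible) bounds this number by $q+1+(e_i-1)(e_i-2)\sqrt q$. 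Exploiting $\varepsilon<\alpha\sqrt q$ and $M>\alpha(q+1)$, one wants to conclude that this forces every $e_i=1$; the same estimates are what exclude two components of the form $X-c$ (which would bring $\infty$ back into $\hd'$) and force the $c_j$ to be distinct and to avoid $A$, completing the factorization.

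The hard part is exactly this point-counting step: converting the arithmetic fact that \emph{all} $\varepsilon$ holes of each of the $M$ non-determined parallel classes are $\GF(q)$-rational into the geometric rigidity that these holes sweep out $\varepsilon$ fixed lines. It is here that the quantitative hypotheses on $\varepsilon$ and $|\hd|$ are spent in full, and a crude Hasse--Weil count does not suffice by itself; a sharper bookkeeping of the totally split fibres is needed. (Alternatively one can cast the completion as a stability statement for the near-blocking set $\hu\cup\hd\subseteq\PG(2,q)$, which misses only lines inside the non-determined parallel classes; the essential difficulty---that exactly $\varepsilon$ further points both suffice and are forced---is the same.)
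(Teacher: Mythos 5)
First, note that the paper itself gives no proof of this theorem: it is quoted from \cite{Sziklai1999} (building on \cite{Szonyi1996}) and closed with a box, so there is no internal argument to compare yours against. Judged on its own, your setup is the right one and matches the known proofs in spirit: after a collineation assume $\infty\notin\hd$, write $\hu$ as a partial graph over $A$ with $|A|=q-\varepsilon$, form $R$ and the quotient $Q$ with $RQ=X^q+H$, observe that for each non-determined affine direction $y$ the specialisation $Q(X,y)$ is totally split and square-free of degree $\varepsilon$, and reduce everything to showing that $Q(X,Y)$ itself splits as $\prod_{j}(X-c_jY+d_j)$ with the $c_j$ distinct and outside $A$. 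That reduction is sound (up to the harmless point that substituting $Y=y$ only reaches the \emph{affine} non-determined directions, so the number of totally split fibres is $q-|\hd|$ rather than $q+1-|\hd|$).

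However, the decisive step --- that a curve of degree at most $\varepsilon<\alpha\sqrt q$ whose fibres over more than $\alpha(q+1)-1$ values of $Y$ are reduced, $\GF(q)$-rational and totally split must be a union of $\varepsilon$ distinct non-vertical lines --- is precisely the content of the theorem, and you do not prove it; you explicitly concede that a direct Hasse--Weil count does not close the inequality and that ``sharper bookkeeping'' is needed. This concession is accurate: for a putative absolutely irreducible component of total degree $e$ and $X$-degree $e'$ one only gets $e'\alpha q\lesssim q+1+(e-1)(e-2)\sqrt q$, which for $e$ anywhere near $\varepsilon\approx\alpha\sqrt q$ is not a contradiction; the published proofs need a dedicated lemma about components all of whose fibres over a large point set are totally split (Sz\H{o}nyi's lacunary-polynomial/nuclei machinery), and this is where the hypotheses $\varepsilon<\alpha\sqrt q$, $|\hd|<(q+1)(1-\alpha)$, $1/2<\alpha<1$ are actually consumed. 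As written, therefore, your text is a correct and well-organised plan with the crux missing, not a proof.
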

\textrm{Sz\H{o}nyi}'s
stability theorem above also stimulates us to restrict ourselves to
examine the \emph{maximal} affine sets only. (An affine set of $q$
points that does not determine all directions is \emph{automatically}
maximal.)

If we examine polynomials in one variable instead of R\'edei
polynomials, we can get similar `stability' results.
Such polynomials occur when we examine $R(X,y)$, $Q(X,y)$ and
$H(X,y)$, or $R$, $Q$ and $H$ over $\GF(q)(Y)$.
The second author conjectured that if `almost all' roots of a
polynomial $g\in\GF(q)[X]$ have muliplicity a power of $p$ then the
quotient $X^q\div g$ extends $g$ to a polynomial in $\GF(q)[X^p]$.
We can prove more.
\begin{nota}
Let $p=\kar\uf\neq 0$ be the characteristic of the \emph{arbitrary}
field $\uf$. Let $s=p^e$ and $q=p^h$ two arbitrary powers of $p$ such
that $e\le h$ (i.e. $s\mid q$ but $q$ is not necessarily a power of
$s$).
\end{nota}

\begin{thm}\label{thm:egydim}
Let $g,f\in\uf[X]$ be polynomials such that $g\cdot f\in\uf[X^s]$.
If $0\le\deg f\le s-1$ then $X^q\div g$ extends $g$ to a polynomial
in $\uf[X^s]$.
\end{thm}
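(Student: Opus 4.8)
We are given $g,f \in \uf[X]$ with $gf \in \uf[X^s]$ and $0 \le \deg f \le s-1$, and we must show that if $Q = X^q \div g$ (the quotient of $X^q$ divided by $g$ in $\uf[X]$, which makes sense since $g$ is monic up to a unit — more precisely we may assume $g$ monic, as scaling changes nothing), then $X^q + (\text{remainder stuff})$, i.e.\ the polynomial $gQ$, lies in $\uf[X^s]$; equivalently $g$ extends to $gQ \in \uf[X^s]$ with $\deg_X(X^q - gQ) < \deg g$. The plan is to exploit the fact that $gf \in \uf[X^s]$ very directly: write $\varphi := gf$, so $\varphi \in \uf[X^s]$, and then observe that
\begin{equation*}
X^q f = (X^q \div g)\, gf + (X^q \bmod g)\, f = Q\varphi + r f,
\end{equation*}
where $r = X^q \bmod g$ has $\deg r < \deg g$. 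Since $q$ is a multiple of $s$ (both are powers of $p$ with $e \le h$), $X^q \in \uf[X^s]$, and $\varphi \in \uf[X^s]$; so $X^q f \equiv rf \pmod{\varphi}$, and I want to extract from this that $rf$, reduced appropriately, is controlled.

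**The key division argument.** The cleaner route: perform the division of $X^q$ by $g$ and simultaneously track everything modulo $X^s$-structure. Consider the Euclidean division of $X^q$ by $\varphi = gf$ in $\uf[X]$: since $X^q, \varphi \in \uf[X^s]$, substituting $X^s = T$ we are dividing $T^{q/s}$ by a polynomial in $T$, so both quotient and remainder lie in $\uf[X^s]$. Call them $Q^\flat \in \uf[X^s]$ and $\rho \in \uf[X^s]$, with $\deg \rho < \deg \varphi = \deg g + \deg f$. Now $X^q = Q^\flat g f + \rho$. I claim $f \mid \rho$: indeed $\rho = X^q - Q^\flat g f$, and... this is where one must be careful, since $f$ need not divide $X^q$. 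Instead, reduce $X^q$ modulo $g$: from $X^q = Q^\flat gf + \rho$ we get $\rho \equiv X^q \pmod g$, but $\deg\rho$ may exceed $\deg g$. The honest statement is that $Q := X^q \div g$ and $Q^\flat f$ agree: $X^q = (Q^\flat f) g + \rho$ with $\deg \rho < \deg g + \deg f$, which is not quite the Euclidean remainder for division by $g$. So I must push $\rho$ down: write $\rho = g\rho_1 + \rho_0$ with $\deg \rho_0 < \deg g$; then $Q = Q^\flat f + \rho_1$ and $X^q \bmod g = \rho_0$, giving $gQ = X^q - \rho_0 = Q^\flat g f + \rho - \rho_0 = \varphi Q^\flat + g\rho_1$. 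So $gQ = \varphi Q^\flat + g\rho_1$, and since $\varphi Q^\flat \in \uf[X^s]$, I am reduced to showing $g\rho_1 \in \uf[X^s]$, where $\deg(g\rho_1) = \deg \rho - \deg g < \deg f \le s - 1 < s$. A polynomial of degree $< s$ lies in $\uf[X^s]$ only if it is constant — so I must show $g\rho_1$ is a constant.

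**Finishing.** To see $g\rho_1$ is constant: we have $g\rho_1 = \rho - \rho_0$ where $\rho \in \uf[X^s]$ and $\rho_0 = X^q \bmod g$. Note $\deg \rho_0 < \deg g$, while $\deg(g\rho_1) < s \le$ anything; the point is $g \rho_1 = \rho - \rho_0$ has degree $< \deg f \le s-1$, hence its degree is $\le s - 2 < s$; combined with... hmm, that alone does not force it constant. The extra input needed is that $g\rho_1$, having degree strictly less than $\deg g$ is impossible unless $\rho_1 = 0$ or $\deg g \le \deg f$; but we do not know $\deg g \le \deg f$. Wait — $\deg(g\rho_1) < \deg f \le \deg g$ (we may assume $\deg g \ge 1$; the case $\deg g = 0$ is trivial), so $\deg(g\rho_1) < \deg g$ forces $\rho_1 = 0$. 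Hence $gQ = \varphi Q^\flat \in \uf[X^s]$, which is exactly the claim: $X^q \div g$ extends $g$ to a polynomial in $\uf[X^s]$.

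**Main obstacle.** The delicate point — and the step I would write out most carefully — is the bookkeeping between "Euclidean division by $\varphi = gf$" and "Euclidean division by $g$": one must verify that $Q^\flat f$ differs from the true quotient $X^q \div g$ only by a polynomial $\rho_1$ with $\deg(g\rho_1) < \deg f$, so that the degree bound $\deg f \le s-1$ can be invoked to kill $\rho_1$ (via $\deg(g\rho_1) < \deg f \le \deg g$). I would also handle at the outset the trivial/degenerate cases ($g$ constant, $f$ constant, $\deg f = 0$) and note that we lose no generality assuming $g$ monic, since $\uf$ being a field (or we only need $g$ to have unit leading coefficient for the division algorithm) makes $X^q \div g$ well defined and scaling $g$ by a unit scales $Q$ inversely, leaving $gQ$ unchanged.
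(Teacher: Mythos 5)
Your setup is sound and in fact matches the paper's strategy: both divide $X^q$ by $\varphi=gf$ and reduce the problem to showing that the remainder $\rho$ has degree less than $\deg g$. Your justification that the quotient $Q^\flat$ and remainder $\rho$ lie in $\uf[X^s]$ (via the substitution $T=X^s$ and uniqueness of Euclidean division) is cleaner than the paper's leading-monomial contradiction argument, and the bookkeeping identity $gQ=\varphi Q^\flat+g\rho_1$ is correct.

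However, the finishing step has a genuine error. You write $\deg(g\rho_1)=\deg\rho-\deg g$, but that quantity is $\deg\rho_1$, not $\deg(g\rho_1)$: when $\rho_1\neq 0$ one has $\deg(g\rho_1)=\deg g+\deg\rho_1=\deg\rho$, which is only bounded by $\deg g+\deg f-1$, not by $\deg f$. So neither of your two attempted conclusions follows: $g\rho_1$ need not have degree below $s$, and the inequality $\deg f\le\deg g$ that you invoke is not a hypothesis and is false in general (take $g=X$, $f=aX^{s-1}$, $gf=aX^s$). The missing ingredient --- which is exactly the paper's key step, and which you set up but never use --- is that $\rho\in\uf[X^s]$ forces $s\mid\deg\rho$ when $\rho\neq 0$. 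Since $gf\in\uf[X^s]$ gives $\deg(gf)=ks$, if $\rho_1\neq 0$ then
\begin{equation*}
\deg g=ks-\deg f\;\le\;\deg\rho\;\le\;ks-1,
\end{equation*}
and the interval $[ks-\deg f,\,ks-1]$ contains no multiple of $s$ precisely because $0\le\deg f\le s-1$; this contradiction yields $\rho_1=0$ and hence $gQ=\varphi Q^\flat\in\uf[X^s]$. With that replacement your argument closes and coincides with the paper's proof.
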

\begin{proof}
We know that $\deg(gf)=ks$ ($k\in\mathbb{N}$).
Let $r=X^q\mod (fg)$ denote the remainder, that is, $X^q=(gf)h +
r$ where $\deg r\le\deg(fg)-1$.

\noindent
Now we show  that $h\in\uf[X^s]$. Suppose to the contrary that
$h\notin\uf[X^s]$, i.e. the polynomial $h$ has at least one
monomial $\bar{a}\notin\uf[X^s]$. Let $\bar{a}$ denote such a monomial
of \emph{maximal degree}. Let $\bar{b}$ denote the leading term of
$fg$. Since $gf\in\uf[X^s]$, also $\bar{b}\in\uf[X^s]$, and since
$\bar{b}$ is the leading term, $\deg\bar{b}=\deg(fg)$. The product
$\bar{a}\bar{b}$ is a monomial of the polynomial $(fg)h$, and since
$\deg\bar{b}>\deg r$, then $\bar{a}\bar{b}$ is also a monomial of the
polynomial $(f\cdot g\cdot h+r)$. The monomial $\bar{a}\bar{b}$ is not
in $\uf[X^s]$, because $\bar{a}\notin\uf[X^s]$ and
$\bar{b}\in\uf[X^s]$. But $f\cdot g\cdot h+r=X^q\in\uf[X^s]$, which
is a contradiction.

\noindent
Hence $r\in\uf[X^s]$, and so $s\mid\deg r$, and thus, if the closed
interval $[\deg g,\linebreak[0]\deg(gf)-1]$ does not contain any
integer that is a multiple of $s$ then $\deg r$ is less than $\deg g$.

\noindent
If we know that $\deg r<\deg g$ then from the equation
$X^q=(gf)h + r$ we get $X^q=g(fh) + r$ hence $r=X^q\mod g$ and
$X^q \div g = f\cdot h$, where $f$ is a polynomial such that
$fg\in\uf[X^s]$ and also $h\in\uf[X^s]$.

\noindent
So it is enough to show that the closed interval $[\deg g,\deg(gf)-1]$
does not contain any integer that is a multiple of $s$. Using
$\deg(fg)=k s$, the closed interval
$[\deg g,\deg(gf)-1]=[k s-\deg f,k s-1]$ and if $0\le\deg f\le s-1$
then it does not contain any integer which is the multiple of $s$.
\end{proof}

This theorem above suggests that if the product
$R(X,y)Q(X,y)$ is an element of $\GF(q)[Y][X^{p\cdot s(y)}]$ while
$R(X,y)\in\GF(q)[Y][X^{s(y)}]\setminus\GF(q)[Y][X^{p\cdot s(y)}]$,
then a `completing result' might be in the background.
If $\hu$ is a maximal affine set then it cannot be completed, so
we conjecture the following.
\begin{conj}
If $\hu$ is a \emph{maximal} affine set that determines the set $\hd$
of directions then $t(y)=s(y)$ for all $y\in\hd$ where $t(y)>2$.
\end{conj}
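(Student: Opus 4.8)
The plan is to reduce the conjecture to an application of Theorem~\ref{thm:egydim} together with the stability machinery mentioned just above the statement. First I would work over the rational function field $\uf=\GF(q)(Y)$ and specialise at $y\in\hd$ afterwards. Fix a determined direction $y\in\hd$ with $t(y)>2$. By \qwy{Proposition~\ref{prop:es}} we already know $s(y)\le t(y)$, so the only thing to rule out is a \emph{strict} inequality $s(y)<t(y)$, equivalently $p\cdot s(y)\mid t(y)$. Assume this for contradiction. Then $R(X,y)Q(X,y)=X^q+H(X,y)$ lies in $\GF(q)[X^{p\cdot s(y)}]$, while $R(X,y)\in\GF(q)[X^{s(y)}]\setminus\GF(q)[X^{p\cdot s(y)}]$. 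Writing $R(X,y)=g(X^{s(y)})$ and setting $s:=p\cdot s(y)$ in the notation of \qwy{Theorem~\ref{thm:egydim}}, the pair $g':=R(X,y)$ and $f':=Q(X,y)$ satisfies $g'f'\in\GF(q)[X^{s}]$; here $q$ plays the role of $q$ in that theorem and $s\mid q$ (but $q$ need not be a power of $s$), exactly the hypothesis set up in the preceding \textbf{Notation}.

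Second I would check the degree condition $0\le\deg_X Q(X,y)\le s-1$ needed to invoke \qwy{Theorem~\ref{thm:egydim}}. This is where the assumption $t(y)>2$ and the maximality of $\hu$ enter. Maximality says $|\hu|$ cannot be increased without determining a new direction; in polynomial terms, the division $X^q\div R(X,y)$ must \emph{not} return a quotient that, reassembled over $Y$, extends $R$ to a larger totally reducible R\'edei polynomial with the same $\hd$. So I would argue: if $\deg_X Q(X,y)=q-|\hu|$ were at least $s=p\cdot s(y)$, then \qwy{Theorem~\ref{thm:egydim}} (applied with the roles of $g$ and $f$ possibly swapped, using the remark in the proof of \qwy{Proposition~\ref{prop:es}} that $R=(X^q-X)\div Q$ when $\deg R\le\deg Q$) would still force the remainder to be $-X$ after adjusting the window $[\deg g,\deg(gf)-1]$ by a multiple of $s$, and iterating this would complete $\hu$ to a strictly larger affine set determining the same directions — contradicting maximality. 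The bookkeeping is that $s(y)\mid|\hu|$, so $q-|\hu|\equiv q\pmod{s(y)}$, and the interval of exponents one must avoid has length $s$; the hypothesis $t(y)>2$, hence $s(y)\ge 2$ and $s=p\cdot s(y)\ge 4$ (or $s\ge 3$ when $p$ is odd and $s(y)\ge 1$... — more carefully $s(y)\ge t(y)/p$ and $t(y)>2$), gives enough room that the relevant multiple-of-$s$ integer can be placed outside $[\deg g,\deg(gf)-1]$, so \qwy{Theorem~\ref{thm:egydim}} applies and yields $X^q\div R(X,y)\in\GF(q)[X^{s}]$.

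Third, having forced $Q(X,y)=X^q\div R(X,y)\in\GF(q)[X^{p\cdot s(y)}]$, I would run the completion argument quantitatively. The totally reducible factor of $Q(X,y)$ whose roots lie in $\GF(q)$ contributes extra points of a line with slope $y$; assembling these over all $y\in\hd$ (or rather, using that the coefficients $\sigma^*$ are polynomials in $Y$ of controlled degree, as recorded in the \textbf{Notation}) one produces an affine set $\hu'\supsetneq\hu$ with the same R\'edei polynomial structure modulo $X^{s(y)}$, hence determining the same $\hd$. This contradicts the maximality of $\hu$. Therefore no $y\in\hd$ with $t(y)>2$ can have $s(y)<t(y)$, so $s(y)=t(y)$ for all such $y$, which is the assertion.

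The hard part will be the third step — turning the purely algebraic conclusion ``$Q(X,y)\in\GF(q)[X^{p\cdot s(y)}]$ for this one $y$'' into a genuine \emph{geometric} extension $\hu'\supsetneq\hu$ with $\hd(\hu')=\hd(\hu)$. Theorem~\ref{thm:egydim} only controls a single specialisation $y$, whereas a completion statement needs the quotient to behave uniformly in $Y$; bridging from the fibrewise information to a statement about the bivariate $Q(X,Y)$ (and checking that the new points really are affine points, i.e. that the extra factors of $Q(X,y)$ glue into linear forms $X-a'Y+b'$ over $\GF(q)[Y]$) is precisely the ``completing theorem'' analogue that the paper flags as only conjectural. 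I expect this to require either a degree/counting argument across several directions simultaneously, or an adaptation of Sz\H{o}nyi's stability method; that is the real obstacle, and is presumably why the authors leave it as a conjecture rather than a theorem.
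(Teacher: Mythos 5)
The statement you are trying to prove is labelled a \emph{conjecture} in the paper: the authors offer no proof of it, only the heuristic in the paragraph preceding it (``This theorem above suggests that \dots a `completing result' might be in the background. If $\hu$ is a maximal affine set then it cannot be completed, so we conjecture the following.''). Your proposal is essentially an elaboration of that same heuristic, and it does not close the gaps that make the statement a conjecture rather than a theorem. Two of these gaps are concrete. First, your invocation of Theorem~\ref{thm:egydim} is a misapplication: that theorem requires a cofactor $f$ with $0\le\deg f\le s-1$ such that $gf\in\uf[X^s]$, and you propose to take $f=Q(X,y)$, whose degree is $q-|\hu|$ — typically far larger than $s-1=p\cdot s(y)-1$. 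The attempted repair (``swapping the roles of $g$ and $f$'', ``adjusting the window by a multiple of $s$'', ``iterating'') is not an argument; the interval $[\deg g,\deg(gf)-1]$ in that proof has length $\deg f$, and once $\deg f\ge s$ it necessarily contains a multiple of $s$, so the mechanism genuinely breaks rather than merely needing bookkeeping. Relatedly, your numerology in the second step is wrong: $t(y)>2$ gives no lower bound on $s(y)$ (the paper's own example of a $\GF(p)$-linear set minus a point has $s=1$, $t=p$), and the inequality $s(y)\ge t(y)/p$ you fall back on is not a consequence of Proposition~\ref{prop:es}, which only gives $s(y)\le t(y)$.

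Second, and more fundamentally, even if one could conclude $Q(X,y)\in\GF(q)[X^{p\cdot s(y)}]$ for a single specialisation $y$, the passage from this fibrewise algebraic fact to a genuine geometric extension $\hu'\supsetneq\hu$ determining the same directions — your third step — is exactly the content of the conjecture, and you explicitly defer it (``I expect this to require either a degree/counting argument \dots or an adaptation of Sz\H{o}nyi's stability method''). The extra $\GF(q)$-roots of $Q(X,y)$ for one $y$ only mark lines of slope $y$; to produce new affine \emph{points} one needs the corresponding factors to glue across all $y$ into linear forms $X-aY+b$, and nothing in the proposal addresses this. So the proposal is a reasonable restatement of the authors' motivating intuition, but it is not a proof, and no proof exists in the paper to compare it against.
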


Note that there can be maximal affine sets which are not linear.
\begin{ex}[Non-linear maximal affine set]
Let $\hu\subset\AG(2,q)$ be a set, $|\hu|=q$, $s=1$,
$q\ge|\hd|\ge\dfrac{q+3}{2}$. In this case $\hu$ cannot be linear
because then $s$ would be at least $p$. But $\hu$ must be maximal
since $q+1$ points in $\AG(2,q)$ would determine all directions.
Embed $\AG(2,q)$ into $\AG(2,q^m)$ as a subgeometry. Then
$\hu\subset\AG(2,q^m)$ is a maximal non-linear affine set of less than
$q^m$ points.
\end{ex}

But if $s>2$, we conjecture that the maximal set is linear.
\begin{conj}
If $\hu$ is a \emph{maximal} affine set that determines the set $\hd$
of directions and $t=s>2$ then $\hu$ is an affine $\GF(s)$-linear set.
\end{conj}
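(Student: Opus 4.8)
The plan is to reduce to Theorem~\ref{thm:ball} by promoting the R\'edei-polynomial analysis from the specializations $Y=y$ to the full two-variable polynomial, extracting a projective $\GF(s)$-linear set containing $\hu\cup\hd$, and then using maximality to force $\hu$ to be its affine part. The first step normalizes the local exponents: since $t=\gcd_{y\in\hd}t(y)=\min_{y\in\hd}t(y)=s>2$, every determined direction satisfies $t(y)\ge t>2$, so the preceding conjecture (assumed here, or proved in parallel) gives $s(y)=t(y)$ for all $y\in\hd$. By Proposition~\ref{prop:es} and the definition of $t(y)$, at every $y\in\hd$ one then has $R(X,y)\in\GF(q)[X^{s(y)}]\setminus\GF(q)[X^{p\cdot s(y)}]$, $Q(X,y)\in\GF(q)[X^{s(y)}]$ and $H(X,y)=f_y(X)^{s(y)}$ with $f_y\notin\GF(q)[X^p]$, while for $y\notin\hd$ one has $R(X,y)Q(X,y)=X^q-X$; together with Proposition~\ref{prop:lin} this shows that both factors of $X^q+H(X,Y)$ carry the ``$X\mapsto X^s$'' structure at every point of $\hd$.

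The second, and main, step is to run the Blokhuis--Ball analysis of the R\'edei polynomial on $R(X,Y)$ itself. Feeding the pointwise fact ``$R(X,y)$ is an $s(y)$-th power in $X$ for every $y$'' into R\'edei's theorem on fully reducible lacunary polynomials should upgrade it to the statement that the coefficients $\sigma_i(Y)$ of $R(X,Y)$ vanish unless $s\mid i$ and that the surviving ones become $s$-th powers once the substitution $X\mapsto X^s$ is undone; this is exactly the assertion that $\hu\cup\hd$ is a projective $\GF(s)$-linear set, so $\hu$ lies in an affine $\GF(s)$-linear set $\hu'$ of the same affine span. The new feature relative to the $|\hu|=q$ case is that $\deg_XR=n<q$ and the complement $Q$ is not totally reducible, so $Q(X,Y)$ has to be carried through the argument as well: Proposition~\ref{prop:es} supplies the needed input on $Q$, and the degree bookkeeping behind Theorem~\ref{thm:egydim} (no multiple of $s$ in the gap interval $[\deg g,\deg gf-1]$) is what recovers a genuine division with remainder over $\GF(q)[Y][X^s]$ from the product $RQ$. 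The only obstruction to $RQ$ lying in $\GF(q)[Y][X^s]$ is the coefficient $h_{q-1}(Y)$ of the $X$-term, which vanishes at every $y\in\hd$ (since $s(y)>1$) and equals $-1$ at every $y\notin\hd$; controlling this polynomial is where the analysis is most delicate.

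The last step is the maximality argument. The affine $\GF(s)$-linear set $\hu'$ is built so that $\hu'\cup\hd$ is exactly the projective $\GF(s)$-linear set detected inside $\hu\cup\hd$, so by the results of Section~2 it determines precisely the directions $\hd$. Thus $\hu'\supseteq\hu$ is an affine set determining only $|\hd|$ directions, and the definition of a maximal affine set forces $\hu'=\hu$; hence $\hu$ is an affine $\GF(s)$-linear set. (That $\GF(s)$ is a genuine subfield of $\GF(q)$, and that $\GF(s)$-linearity is the right conclusion even if the analysis first produces a larger field, follows as in Theorem~\ref{thm:ball}, since every $\GF(s^k)$-linear set is also $\GF(s)$-linear.)

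I expect the second step to be the real obstacle. The classical Blokhuis--Ball proof leans on $R(X,y)$ being a product of \emph{distinct} linear factors for non-determined $y$ and on a careful count of the common roots of $R(X,y)$ and its derivative; redoing this with the non-reducible quotient $Q$ present --- and in particular handling the ``bad'' $X$-coefficient $h_{q-1}(Y)$ and the bookkeeping that splits $RQ$ back into quotient and remainder over $\GF(q)[Y][X^s]$ --- is where the work concentrates. A secondary difficulty is that the exponents $s(y)$ need not a priori be constant on $\hd$; showing that they all equal $s$, or that the gcd structure still yields a single $\GF(s)$-linear set, may require the maximality hypothesis once more, or an argument mirroring the constancy of $s(y)$ in the $|\hu|=q$ case.
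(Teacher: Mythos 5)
This statement is stated in the paper as a \emph{conjecture}: the authors offer no proof of it, and to the best of the paper's own account it is open. So there is no ``paper proof'' to compare against; what matters is whether your sketch actually closes the problem, and it does not. What you have written is a plan whose central step --- upgrading the pointwise fact that each specialization $R(X,y)$ lies in $\GF(q)[X^{s(y)}]$ to the two-variable statement that the coefficients $\sigma_i(Y)$ vanish unless $s\mid i$ and that $\hu\cup\hd$ is a projective $\GF(s)$-linear set --- is exactly the part that is not known how to do when $|\hu|<q$, and you explicitly defer it (``should upgrade it,'' ``I expect the second step to be the real obstacle''). In the $|\hu|=q$ case of Theorem~\ref{thm:ball} the argument leans crucially on $R(X,y)$ splitting into \emph{distinct} linear factors for every non-determined $y$ and on $R$ having $X$-degree exactly $q$; here $\deg_X R=n<q$ and the cofactor $Q(X,Y)$ is not totally reducible and is not the R\'edei polynomial of any point set, so the lacunary-polynomial machinery does not transfer. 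Proposition~\ref{prop:es} and Theorem~\ref{thm:egydim} give one-variable information at each fixed $y$, but nothing in the paper (or in your sketch) controls how the exponents $s(y)$, the polynomials $f_y$, and the coefficient $h_{q-1}(Y)$ fit together across different $y$; that glue is the missing idea, not a routine bookkeeping step.

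Two further gaps: (i) your first step invokes the preceding conjecture ($t(y)=s(y)$ for $t(y)>2$), which is likewise unproved in the paper, so even granting your step two the argument would be conditional; (ii) your final maximality step assumes that the affine $\GF(s)$-linear set $\hu'$ you would extract determines \emph{exactly} $\hd$ and no more --- Section~2 only shows that the direction set of an affine linear set is a projective linear set containing the directions of any subset, not that enlarging $\hu$ to $\hu'$ cannot create new directions. If $\hu'$ determined a direction outside $\hd$, maximality of $\hu$ would give you nothing. The paper's own Example of a non-maximal affine linear set shows that the relationship between linear sets and the direction sets of their subsets is subtle, so this point needs an actual argument.
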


Although we conjecture that the maximal affine sets with $s=t>2$ are
linear sets, the converse is not true.
\begin{ex}[Non-maximal affine linear set]
Let $\AG(2,s^i)$ be a canonical subgeometry of $\AG(2,q=s^{i\cdot j})$
and let $\hu$ be an affine $\GF(s)$-linear set in the subgeometry
$\AG(2,s^i)$ that contains more than $s^i$ points. Then $\hu$
determines the same direction set that is determined by the
subgeometry $\AG(2,s^i)$.
\end{ex}

\end{document}